\newcommand\org@maketitle{}
\newcommand\@authors{}
\let\org@maketitle\maketitle
\def\maketitle{%
	\let\@authors\authors
	\nxandlist{; }{ and }{; }\@authors
	\hypersetup{
		linktocpage=true,
		pdftitle={\@title},
                pdfauthor={\@authors},
                pdfsubject={\subjclassname. \@subjclass},
		pdfkeywords={\@keywords}
	}%
	\org@maketitle
}
\DeclareMathAlphabet{\mathcal}{OMS}{cmsy}{m}{n}
\renewcommand{\PrintDOI}[1]{\doi{#1}}
\numberwithin{equation}{section}
\newtheorem{theorem}{Theorem}[section]
\newtheorem{lemma}[theorem]{Lemma}
\newtheorem{proposition}[theorem]{Proposition}
\theoremstyle{definition}
\newtheorem{definition}[theorem]{Definition}
\newtheorem{condition}[theorem]{Condition}
\theoremstyle{remark}
\newtheorem{remark}[theorem]{Remark}
\newcommand{\cD}{{\mathcal D}}
\newcommand{\al}{\alpha}
\newcommand{\be}{\beta}
\newcommand{\g}{\gamma}
\newcommand{\de}{\delta}
\newcommand{\e}{\varepsilon}
\newcommand{\la}{\lambda}
\newcommand{\ka}{\kappa}
\newcommand{\R}{\mathbb{R}}
\newcommand{\bff}{\mathbf{f}}
\newcommand{\bq}{\mathbf{q}}
\newcommand{\D}{\nabla}
\renewcommand{\div}{\operatorname{div}}
\newcommand{\diam}{\operatorname{diam}}
\newcommand{\mean}[1]{\langle #1\rangle}
\def\Xint#1{\mathchoice
  {\XXint\displaystyle\textstyle{#1}}%
  {\XXint\textstyle\scriptstyle{#1}}%
  {\XXint\scriptstyle\scriptscriptstyle{#1}}%
  {\XXint\scriptscriptstyle\scriptscriptstyle{#1}}%
  \!\int}
\def\XXint#1#2#3{{\setbox0=\hbox{$#1{#2#3}{\int}$}
    \vcenter{\hbox{$#2#3$}}\kern-.5\wd0}}
\def\dashint{\Xint-}
\mathchardef\ordinarycolon\mathcode`\:
\author{Hongjie Dong}
\author{Seongmin Jeon}
\address[H. Dong]{Division of Applied Mathematics
\newline\indent
Brown University
\newline\indent
182 George Street, Providence RI 02912, USA}
\email{hongjie\_dong@brown.edu}
\address[S. Jeon]{Department of Mathematics Education
\newline\indent
Hanyang University
\newline\indent
222 Wangsimni-ro, Seongdong-gu, Seoul 04763, Republic of Korea}
\email{seongminjeon@hanyang.ac.kr}
\title[Boundary estimates for elliptic operators in divergence form]{Boundary estimates for elliptic operators in divergence form with VMO coefficients}
\subjclass[2020]{35J15, 35J47, 35J67}
\keywords{elliptic equations and systems in divergence form; vanishing mean oscillation; boundary regularity; the Hopf-Oleinik lemma}
\thanks{}
\begin{document}
\begin{abstract}
We establish boundary regularity estimates for elliptic systems in divergence form with VMO coefficients. Additionally, we obtain nondegeneracy estimates of the Hopf-Oleinik type lemma for elliptic equations. In both cases, the moduli of continuity are expressed in terms of the $L^p$-mean oscillations of the coefficients and data.
\end{abstract}

\maketitle

\section{Introduction}
The qualitative theory of partial differential equations has been extensively studied over time. In this paper, we focus on boundary regularity estimates and nondegeneracy estimates of the Hopf-Oleinik type lemma.

\subsection{Boundary regularity}
For elliptic equations in divergence form, it was shown in \cite{DonKim17} that solutions are $C^1$ if the coefficients satisfy the Dini mean oscillation (DMO) condition, which is a weaker requirement than the standard Dini condition. This regularity extends to the boundary if the boundary also satisfies DMO-type conditions; see \cite{DonJeoVit24}. The DMO condition appears to be almost optimal to ensure the $C^1$-regularity of solutions, since even harmonic functions in the $C^1$-domain may fail to be Lipschitz, see e.g., \cite{Saf08}. 

Our first main objective in this paper is to establish boundary and interior regularity estimates for elliptic systems in divergence form. We consider coefficients having vanishing mean oscillation (VMO), which is a strictly weaker requirement than continuity and the DMO conditions. In particular, we will derive an explicit modulus of continuity that involves only $L^p$-mean oscillations of them.

\subsection{Hopf-Oleinik type lemma}
Our second central result concerns the Hopf-Oleinik type lemma. The classical Hopf-Oleinik lemma states as follows: suppose $u$ is a positive harmonic function in $\Omega$. If $\Omega\in C^2$ and $u(x_0)=0$ for some $x_0\in \partial\Omega$, then $$
\liminf_{t\to0+}\frac{u(x_0+t\nu)}t>0,
$$
where $\nu$ is the inward unit normal to $\partial\Omega$ at $x_0$. Such result also holds for elliptic equations in non-divergence form with measurable coefficients when $\Omega$ is $C^{1,\text{Dini}}$. See \cite{Saf08}, the survey paper \cite{AN22}, and the references therein.

The lemma also applies to equations in divergence form under certain regularity conditions on the coefficients and the boundary. The necessary condition has been relaxed to DMO conditions; see \cite{RenSirSoa23} {\color{blue}and \cite{DonJeoVit24}}. However, as in the case of boundary regularity, the Hopf-Oleinik lemma fails when only continuity assumptions are imposed without DMO conditions; see \cite{Naz12} and \cite{AN22}.

Analogous to our boundary regularity result, we achieve the estimates of a Hopf-Oleinik type lemma under the VMO condition, and provide an explicit lower bound of the finite difference quotient expressed in terms of $L^p$-mean oscillations.

\subsection{Main results}
In this paper, we consider a domain $\Omega$ whose boundary can be expressed as a Lipschitz graph. More precisely, we assume
$$
\Omega=\{(x',x_n)\in\R^n\,:\, x_n>\g_\Omega(x')\},\quad \partial\Omega=\{(x',x_n)\in\R^n\,:\, x_n=\gamma_\Omega(x')\}
$$
for some Lipscthiz function $\g_\Omega:\R^{n-1}\to\R$ satisfying
\begin{align}
    \label{eq:domain-graph}
    \g_\Omega(0)=0\quad\text{and}\quad \D_{x'}\g_\Omega(0)=0.
\end{align}

Below we state our main results. For relevant notation, we refer to Section~\ref{sec:prel}.

\medskip

We first consider the following elliptic system
\begin{align}\label{eq:pde}
\begin{cases}
        D_\al(A^{\al\be}D_\be u)=\div\bff&\text{in }\Omega\cap B_1,\\
        u=g&\text{on }\partial\Omega\cap B_1,
\end{cases}
\end{align}
where $u=(u^1,\ldots,u^m)^{T}$, $m\ge1$, is a (column) vector-valued function. The coefficients $A^{\al\be}=(a^{\al\be}_{ij})^m_{i,j=1}$ are $m\times m$-matrices, $1\le \al,\be\le n$.

For $p>n$ and $p_0>1$, we let
\begin{align}\label{eq:mod}\begin{split}
\sigma(\rho)&:=\left(\|u\|_{L^2(\Omega\cap B_1)}+\|g\|_{L^2(\Omega\cap B_1)}+\int_\rho^1\frac{(\omega_{\bff,p}+\omega_{\D g,p}+\omega_{A,p}+\omega_{\D_{x'}\g_\Omega,p})(s)}sds\right)\\
&\qquad \cdot \exp\left(C\int_\rho^1\frac{(\omega_{A,p_0}+\omega_{\D_{x'}\g_\Omega,p_0})(s)}sds\right), \quad 0<\rho<1/2,
\end{split}\end{align}
where $C=C(n,m,\la,p,p_0,\|\D_{x'}\g_\Omega\|_{L^\infty(B_1')}, \omega_{A,1}+\omega_{\D_{x'}\g_\Omega,1})>0$.

Then our first main theorem is as follows.

\begin{theorem}
    \label{thm:reg}
Let $u\in H^1({\Omega\cap B_1;\R^m)}$ be a solution of \eqref{eq:pde}, and $p>n$ and $p_0>1$ with $1/2<\frac1{p_0}+\frac1p<1$. Assume that $A^{\al\be}$ and $\D_{x'}\g_\Omega\in L^\infty(B_1')$ are of VMO, and $A^{\al\be}$ satisfies \eqref{eq:assump-coeffi}. Suppose $\bff\in L^\infty(\Omega\cap B_1;\R^{m\times n})$ and $g\in W^{1,\infty}(\Omega\cap B_1;\R^m)$. Then, for any $x,y\in \Omega\cap B_{1/2}$ with $0<|x-y|<1/4$,
\begin{align}
    \label{eq:reg}
    \frac{|u(x)-u(y)|}{|x-y|}\le C\sigma(|x-y|)+\|\D g\|_{L^\infty(\Omega\cap B_1)},
\end{align}
where $C>0$ is a constant depending only on $n$, $m$, $p$, $p_0$, $\la$, $\|\bff\|_{L^\infty(\Omega\cap B_1)}$, $\|\D g\|_{L^{\infty}(\Omega\cap B_1)}$, $\|\D_{x'}\g_\Omega\|_{L^\infty(B_1')}$, and $\omega_{A,1}+\omega_{\D_{x'}\g_\Omega,1}$.
\end{theorem}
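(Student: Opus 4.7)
My approach is to adapt the Campanato-type mean oscillation iteration of \cite{DonKim17} and \cite{DonJeoVit24} to the present VMO (as opposed to DMO) setting, tracking explicit $L^p$-quantities at every scale; the Dini-integrable factor in the final modulus is replaced by a Gronwall-type exponential of an $L^{p_0}$-Dini integral of the coefficient oscillation. I first reduce to the flat-boundary case via the bi-Lipschitz change of variables $y=(x', x_n - \g_\Omega(x'))$, which maps $\Omega\cap B_r$ onto a neighborhood of the half-ball $B_r^+$ and $\pa\Omega\cap B_r$ onto $\{y_n=0\}$. The transformed system has new coefficients $\widetilde A^{\al\be}$ that remain VMO with
\[
\omega_{\widetilde A,p}(r)\le C\bigl(\omega_{A,p}(r)+\omega_{\D_{x'}\g_\Omega,p}(r)\bigr),
\]
since $\D_{x'}\g_\Omega\in L^\infty$ and products of $L^\infty$-functions with VMO moduli are VMO with controlled moduli. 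Subtracting a Lipschitz extension $G$ of $g$, with $\|\D G\|_{L^\infty}\lesssim\|\D g\|_{L^\infty}$, reduces to the homogeneous boundary case $u-G=0$ on the flat piece and merely adds a bounded term to the right-hand side whose $L^p$-oscillation is controlled by $\omega_{\D g,p}$.

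The heart of the argument is a freezing and comparison step. For $x_0\in\overline{\Omega}\cap B_{1/2}$ and $0<r\le 1/4$, I replace $A$ by its average $\overline A_{x_0,r}$ and let $v$ solve the resulting constant-coefficient system with the same lateral values as $u$ on $B_r(x_0)\cap\Omega$ and zero interior data. Then $w=u-v$ satisfies a divergence-form system with zero flat-boundary values and right-hand side $\div\bigl((\overline A-A)\D u+(\bff-\overline{\bff})\bigr)$. The $L^p$-estimate on the half-ball for the constant-coefficient system (valid for every $p<\infty$ because $\overline A$ is constant and the boundary is flat), combined with a H\"older/interpolation argument that exploits $A-\overline A\in L^\infty$ and the admissibility condition $1/2<1/p_0+1/p<1$, yields a schematic bound
\[
\bigl(\dashint_{B_{r/2}(x_0)\cap\Omega}|\D w|^p\bigr)^{1/p}\le C\omega_{A,p_0}(r)\bigl(\dashint_{B_r(x_0)\cap\Omega}|\D u|^q\bigr)^{1/q}+C\omega_{\bff,p}(r)
\]
for a suitable auxiliary exponent $q$, whose integrability for $\D u$ is guaranteed by a Meyers-type self-improvement. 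Since $v$ solves a constant-coefficient system with flat boundary, $\D v\in C^\al_{\loc}$ for every $\al<1$, so its $L^p$-mean oscillation on $B_{\theta r}(x_0)$ is bounded by $C\theta^\al$ times its $L^p$-norm on $B_{r/2}(x_0)$. The triangle inequality then gives a recursion
\[
\Phi(x_0,\theta r)\le C\theta^\al\Phi(x_0,r)+C\bigl(\omega_{A,p_0}+\omega_{\D_{x'}\g_\Omega,p_0}\bigr)(r)\,M(x_0,r)+C\bigl(\omega_{\bff,p}+\omega_{\D g,p}+\omega_{A,p}+\omega_{\D_{x'}\g_\Omega,p}\bigr)(r),
\]
where $\Phi(x_0,r)$ denotes an $L^p$-mean oscillation of $\D u$ on $B_r(x_0)\cap\Omega$ and $M(x_0,r)$ an $L^q$-average of $|\D u|$.

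Choosing $\theta$ small enough that $C\theta^\al\le 1/2$ and iterating at dyadic scales $r_k=\theta^k$, a discrete Gronwall lemma bounds $\Phi(x_0,r_k)+M(x_0,r_k)$ by the sum of the $L^p$-oscillation contributions (plus initial data) multiplied by $\prod_{j=0}^{k}\bigl(1+C\omega_{A,p_0}(r_j)+C\omega_{\D_{x'}\g_\Omega,p_0}(r_j)\bigr)$; replacing the sums by dyadic Dini integrals and the product by its exponential via $1+t\le e^t$ reproduces precisely the modulus $\sigma(\rho)$ in \eqref{eq:mod}, with the base quantities $\Phi(x_0,1)+M(x_0,1)$ controlled by Caccioppoli and Meyers in terms of $\|u\|_{L^2}+\|g\|_{L^2}+\|\bff\|_{L^\infty}+\|\D g\|_{L^\infty}$. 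The pointwise bound \eqref{eq:reg} then follows from the standard Campanato chaining argument: telescoping $u(x)-u(y)$ through a sequence of concentric balls of dyadically shrinking radii and applying Poincar\'e at each step yields $|u(x)-u(y)|\le C|x-y|\sigma(|x-y|)$, and undoing the subtraction of $G$ accounts for the extra $\|\D g\|_{L^\infty}$-term on the right-hand side of \eqref{eq:reg}.

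The main obstacle, relative to the DMO setting of \cite{DonKim17,DonJeoVit24}, is the absence of Dini integrability, which prevents a direct summation of the iteration. The remedy is the splitting of the moduli into an $L^p$-component contributing additively and an $L^{p_0}$-component contributing multiplicatively (and hence exponentially), justified by the condition $1/2<1/p_0+1/p<1$; closing the Meyers self-improvement uniformly in scale while keeping the constants depending only on $\|\D_{x'}\g_\Omega\|_{L^\infty}$ and on $\omega_{A,1}+\omega_{\D_{x'}\g_\Omega,1}$ (rather than on the full VMO moduli) is the principal technical delicacy.
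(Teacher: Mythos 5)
Your overall architecture --- flattening the boundary, subtracting the boundary data, freezing the coefficients, splitting the error into an additive $L^p$-oscillation term and a multiplicative $L^{p_0}$-oscillation term, and closing with a discrete Gronwall iteration that produces the exponential factor --- is exactly the strategy of the paper. But there is a genuine gap at the single most delicate point of that iteration. The comparison step produces an error term of the form $\omega_{A,p_0}(r)\,M(x_0,r)$, where $M(x_0,r)$ is an average of the full gradient $|\D u|$, not of its oscillation. To run Gronwall you must control $M$ by the oscillation quantities, and the natural recursion $M(\th r)\le M(r)+C\th^{-n/q}\phi(x_0,r)$ (obtained by comparing the optimal constants $\bq_{x_0,\th r}$ and $\bq_{x_0,r}$) carries a large, fixed, non-summable coefficient on $\phi$. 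Consequently the coupled system for $(\phi(x_0,r_k),M(x_0,r_k))$ --- or for any fixed combination $\phi+\e M$ --- does not telescope into $\prod_j\bigl(1+C\omega_{A,p_0}(r_j)\bigr)$; it picks up a geometrically growing factor instead. Your proposal simply asserts that ``a discrete Gronwall lemma bounds $\Phi(x_0,r_k)+M(x_0,r_k)$'' without resolving this coupling, and one cannot instead feed in a scale-uniform bound on $M$ a priori, because such a bound is precisely the conclusion of the iteration (Proposition~\ref{prop:grad-est}) and is not uniform in the VMO setting.

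The paper's resolution is to replace the single-scale oscillation by the sum over all coarser scales, $\Phi(x_0,r)=\sum_{j}\phi(x_0,\ka^{-j}r)$; Lemma~\ref{lem:grad-est} shows that $\Phi(x_0,r)+\|\D u\|_{L^{p_1}(B_1^+)}$ controls the gradient average, so the error term becomes $\omega_{A,p_0}(r)\bigl(\Phi(x_0,r)+\|\D u\|_{L^{p_1}(B_1^+)}\bigr)$. Even then the induced recursion for $\Phi$ carries an extra $\tfrac12\phi(x_0,\rho)$ that spoils the product bound, and one needs the corrected quantity $\Phi^*(x_0,\rho)=\Phi(x_0,\rho)-\tfrac12\Phi(x_0,\ka^{-1}\rho)$, comparable to $\Phi$, which satisfies the clean inequality $\Phi^*(x_0,\ka\rho)\le(1+C\omega_{A,p_0}(\rho))\Phi^*(x_0,\rho)+\dots$ and hence iterates to the exponential. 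This device is what your argument is missing. Two smaller points: (i) a ``Meyers-type self-improvement'' only yields $L^{2+\e}$ integrability of $\D u$ and cannot reach the auxiliary exponent needed when $p>n$; the paper uses the $W^{1,p}$ estimate for VMO coefficients at each scale. (ii) At boundary points the tangential gradient must be compared with $0$ (only the normal derivative admits a free constant), which is why $\phi$ has a two-case definition; approximating by a full constant matrix $\bq$ up to the boundary would not give the decay you claim.
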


A similar result was recently obtained in \cite{Tor24} for non-divergence form elliptic equations when either $x$ or $y$ is on $\partial\Omega$, where a modulus of continuity of solutions is expressed in terms of the boundary of the domain and the data. However, it is noteworthy that their approach relies on boundary Harnack principles and comparison principles, which are not applicable in the case of systems considered in this paper.

\begin{remark}
Our result in Theorem~\ref{thm:reg} can be extended to the second-order elliptic system with lower-order terms:
\begin{align}\label{eq:pde-lot}
    \begin{cases}D_\al(A^{\al\be}D_\be u)+D_\al(B^\al u)+\hat B^\al D_\al u+Nu=\div\bff+h&\text{in }\Omega\cap B_1,\\
      u=g  &\text{on }\partial\Omega\cap B_1,
    \end{cases}
\end{align}
where $B^\al\in L^\infty(\Omega\cap B_1;\R^{m\times m})$, $\hat B^\al, N\in L^{n+\e}(\Omega\cap B_1;\R^{m\times m})$, and $h\in L^{n+\e}(\Omega\cap B_1;\R^{m})$ for some $\e>0$. Indeed, for any $q\in(1,\infty)$, we have $u\in W^{1,q}(\Omega\cap B_{3/4};\R^m)$ by the $W^{1,q}$ estimate. Then, the first equation in \eqref{eq:pde} can be rewritten as
$$
D_\al(A^{\al\be}D_\be u)=D_\al\tilde f_\al+\tilde h,
$$
where $\tilde f_\al:=f_\al-B^\al u\in L^\infty$ and $\tilde h:=h-\hat B^\al D_\al u-Nu\in L^{n+\e}$. The latter can be rewritten into $\div H$ for some $H\in W^{1,n+\e}(\Omega\cap B_{3/4})\subset L^\infty(\Omega\cap B_{3/4})$ by solving a divergence equation.
\end{remark}

\begin{remark}
    In Theorem~\ref{thm:reg}, we may assume $p_0\le p$. Then, by using \eqref{eq:mod-nondecr}, we can bound $\sigma$ in \eqref{eq:mod} by
    \begin{align*}
    \sigma(\rho)&\lesssim\left(\|u\|_{L^2(\Omega\cap B_1)}+\|g\|_{L^2(\Omega\cap B_1)}+\int_\rho^1\frac{(\omega_{\bff,p}+\omega_{\D g,p})(s)}sds\right)\\
&\qquad \cdot \exp\left(C\int_\rho^1\frac{(\omega_{A,p}+\omega_{\D_{x'}\g_\Omega,p})(s)}sds\right).
\end{align*}
\end{remark}

Our second primary result concerns the elliptic scalar equation
    \begin{align}\label{eq:Hopf-nonflat}
            \div(A\D u)=0\quad\text{in }\Omega\cap B_1.
    \end{align}
Note that $0\in \partial\Omega$ from \eqref{eq:domain-graph}. We let
$$
\tilde\Omega:=\{(x',x_n)\in\R^n\,:\, x_n>\g_\Omega(x')+1/4\}\cap B_{3/4}\Subset \Omega\cap B_1.
$$

\begin{theorem}\label{thm:Hopf}
Let $u\in H^1(\Omega\cap B_1)$ be a nonnegative solution of \eqref{eq:Hopf-nonflat}, $u(0)=0$, and $p_0>1$. Suppose $A$ and $\D_{x'}\g_{\Omega}\in L^{\infty}(B_1')$ are of VMO, and $A$ satisfies \eqref{eq:assump-coeffi-scalar}. Then, there are constants $C>0$ and $c>0$, depending only on $n$, $\lambda$, $p_0$, $\|\D_{x'}\g_\Omega\|_{L^\infty(B_1')}$, and 
$\omega_{A,1}+\omega_{\D_{x'}\g_\Omega, 1}$,
such that for all $t\in(0,1/2)$,
\begin{align}
    \label{eq:Hopf}
    \frac{u(t\nu)}{t}\ge c\|u\|_{L^2(\tilde\Omega)}\exp\left(-C\int_{t}^1\frac{(\omega_{A,p_0}+\omega_{\D_{x'}\g_\Omega,p_0})(s)}{s}ds\right),
\end{align}
where $\nu$ is the inward unit normal to $\Omega$ at $0$.
\end{theorem}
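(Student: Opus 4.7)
The plan is to mirror the dyadic iteration underlying Theorem~\ref{thm:reg}, but used now to track a \emph{lower} bound for the normal slope at $0$ rather than an upper bound for oscillations. I would first flatten the boundary via the bi-Lipschitz change of variables $\Phi(x',x_n)=(x', x_n-\g_\Omega(x'))$, which sends $\Omega\cap B_{r_0}$ to the half ball $B^+_{r_0}$ for a suitable $r_0\in(0,1)$ and converts \eqref{eq:Hopf-nonflat} into $\div(\tilde A\,\D \tilde u)=0$ with $\tilde u\ge 0$, $\tilde u=0$ on the flat portion near $0$, and $\Phi_*\nu=e_n$. The transformed coefficient $\tilde A$ remains uniformly elliptic with VMO modulus
$$\omega_{\tilde A,p_0}(r)\;\lesssim\;\omega_{A,p_0}(r)+\omega_{\D_{x'}\g_\Omega,p_0}(r)=:\omega(r),$$
so the problem is reduced to a nonnegative solution in a half-ball with VMO coefficients.

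Next, at each dyadic scale $r_k=2^{-k}r_0$, set $\bar A_k:=\dashint_{B^+_{r_k}}\tilde A$ and let $v_k$ solve the frozen-coefficient problem $\div(\bar A_k\,\D v_k)=0$ in $B^+_{r_k}$ with $v_k=\tilde u$ on $\pa B^+_{r_k}$. A Caccioppoli-type energy argument, combined with H\"older's inequality applied to $(\tilde A-\bar A_k)\D\tilde u$ exactly as in the proof of Theorem~\ref{thm:reg}, yields
$$\|\tilde u-v_k\|_{L^2(B^+_{r_k})}\;\lesssim\;\omega(r_k)\,\|\tilde u\|_{L^2(B^+_{2r_k})}.$$
Since $v_k$ solves a constant-coefficient divergence-form equation vanishing on the flat boundary, boundary $C^{1,\alpha}$ regularity defines a slope $a_k:=\pa_{x_n}v_k(0)\ge 0$, and the classical boundary Hopf–Oleinik lemma for $\div(\bar A_k\,\D\cdot)$ gives $a_k\gtrsim r_k^{-(n+2)/2}\|v_k\|_{L^2(B^+_{r_k})}$. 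Transferring this identity from $v_k$ to $\tilde u$ and then to $v_{k+1}$ using the $L^2$ closeness above would produce the multiplicative one-step estimate
$$a_{k+1}\;\ge\;(1-C\omega(r_k))\,a_k.$$

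Telescoping this inequality and comparing the discrete sum to the integral yields $a_k\ge a_0\exp\bigl(-C\int_{r_k}^{r_0}\omega(s)/s\,ds\bigr)$. For the base case, a boundary weak Harnack inequality for divergence-form equations with bounded measurable coefficients (applied on $B^+_{r_0}$) together with the classical Hopf lemma for $\div(\bar A_0\D\cdot)$ gives $a_0\gtrsim \|u\|_{L^2(\tilde\Omega)}$, with $r_0$ fixed depending only on $\|\D_{x'}\g_\Omega\|_{L^\infty}$. Choosing $k$ with $r_k\sim t$ and returning to $u$ via $u(t\nu)\approx \tilde u(t e_n)\approx v_k(te_n)\approx a_k t$ (each $\approx$ controlled by the $L^2$ error above, which is absorbed by the exponential factor) produces \eqref{eq:Hopf}.

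The principal obstacle is to extract the \emph{multiplicative} factor $(1-C\omega(r_k))$ rather than an additive error: under VMO alone one cannot invoke a Lipschitz estimate for $\tilde u$ at $0$, so the argument must rely on the normalization $a_k\sim r_k^{-(n+2)/2}\|v_k\|_{L^2(B^+_{r_k})}$ and on the fact that the $L^2$ closeness of $\tilde u$ and $v_k$ scales with the same $L^2$ norm, producing a relative (hence multiplicative) error proportional to $\omega(r_k)$. A secondary subtlety is that $a_k$ need not converge as $k\to\infty$, but this is unneeded: the finite-$t$ lower bound \eqref{eq:Hopf} is exactly what the finite Gronwall iterate provides. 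Uniformity of the base-case bound $a_0\gtrsim\|u\|_{L^2(\tilde\Omega)}$ in the VMO modulus is ensured since the bounded-measurable-coefficient Hopf lemma quantifies it solely in terms of ellipticity and the Lipschitz character of $\pa\Omega$.
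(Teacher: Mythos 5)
Your strategy---flatten the boundary, freeze the coefficients on dyadic half-balls, track the normal slope $a_k$ of the frozen-coefficient approximants $v_k$, and close a multiplicative Gronwall iteration---is exactly the paper's. However, there are genuine gaps. The most basic one: after flattening you assert that $\tilde u=0$ ``on the flat portion near $0$,'' but the theorem only assumes $u(0)=0$; no Dirichlet condition is imposed on $\partial\Omega\cap B_1$. The paper devotes all of Step~2 of its proof to this point: it builds a comparison function $\bar u\le \hat u$ solving the same equation with boundary data $\hat u\,\zeta$ (a cutoff supported away from the flat boundary), applies the flat-boundary estimate to $\bar u$, and then recovers $\|\bar u\|_{L^2}\gtrsim \|\hat u\|_{L^2(\tilde\Omega)}$ via boundary H\"older and Harnack estimates on a sphere $S$ at a carefully chosen radius. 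Without this reduction you prove the theorem only under the additional hypothesis that $u$ vanishes on $\partial\Omega\cap B_1$.

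The one-step estimate also has unresolved issues. (i) Your $L^2$ perturbation bound pairs the $L^{p_0}$ oscillation of $A$ with an $L^{p_2}$ bound on $\D\tilde u$ via H\"older; at exponent $2$ this forces $\frac1{p_0}+\frac1{p_2}=\frac12$, i.e.\ $p_0>2$. For general $p_0>1$ one must work at an exponent $p_1<2$, as the paper does with $1<p_1<p_0<p_2$ and $\frac1{p_0}+\frac1{p_2}=\frac1{p_1}$. (ii) ``Transferring'' the slope from $v_k$ to $v_{k+1}$ through $L^2$ closeness is not automatic: $v_k$ and $v_{k+1}$ solve equations with \emph{different} constant matrices $\mean{A}_{\cD_k}\ne\mean{A}_{\cD_{k+1}}$, so their difference solves neither, and pointwise boundary gradient estimates do not apply directly. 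The paper inserts an intermediate corrector $\tilde w_k$ solving $\div(\mean{A}_{\cD_{k+1}}\D\tilde w_k)=\div((\mean{A}_{\cD_{k+1}}-\mean{A}_{\cD_k})\D v_k)$ precisely so that $\tilde v_k-v_{k+1}$ satisfies one constant-coefficient problem. (iii) Making the error \emph{relative} to $a_k$ requires a Carleson-type estimate for the variable-coefficient solution itself (to dominate $\|\D u\|_{L^{p_2}(\cD_k)}$ by $r_k^{-1}\inf u$ over an interior subregion, hence by $\mu_k$ via boundary Harnack for $v_k$), together with an absorption argument valid only for $k\ge k_0$ with $\omega(r_{k_0})$ small---this is where VMO is actually used. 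You correctly identify the normalization $a_k\sim r_k^{-(n+2)/2}\|v_k\|_{L^2}$ as the key mechanism, but these three ingredients are what make it work and are missing from the outline.
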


\begin{remark}
We note that by the proof of Theorem \ref{thm:Hopf}, the factor $\|u\|_{L^2(\tilde\Omega)}$ can be replaced with $\|u\|_{L^2(\Omega\cap B_{3/4})}$ if $u$ vanishes on $\partial\Omega\cap B_1$. Using the interior Harnack inequality, it can also be replaced with $u(\nu/2)$.
\end{remark}

Similar to Theorem~\ref{thm:reg}, a result analogous to Theorem~\ref{thm:Hopf} was established in \cite{Tor24} for non-divergence form equations. However, our argument is quite different from that in \cite{Tor24}, as we do not rely on the use of barrier functions.

\subsection{Structure of the paper}
The paper is organized as follows. In Section~\ref{sec:prel}, we introduce several definitions, assumptions, and preliminary lemmas  necessary for establishing our main results. Section~\ref{sec:main-thm} is devoted to the proofs of the main theorems. Specifically, in Section~\ref{sec:main-thm-1} we present the proof of Theorem~\ref{thm:reg} and in Section~\ref{sec:main-thm-2} we provide the proof of the nondegeneracy estimates of the Hopf-Oleinik type lemma (Theorem~\ref{thm:Hopf}).

\section{Preliminaries}\label{sec:prel}

Throughout the paper, we use the notation $x=(x',x_n)$ to denote a point in $\R^n$. For $x\in\R^n$ and $r>0$, we denote
\begin{align*}
    &B_r(x):=\{y\in\R^n\,:\,|y-x|<r\},\quad B_r^+(x):=B_r(x)\cap \{y_n>0\},\\
    & B'_r(x'):=\{y'\in\R^{n-1}\,:\, |y'-x'|<r\}.
\end{align*}
When $x=0$, we simply write $B_r(0)=B_r$, $B_r^+(0)=B_r^+$, and $B'_r(0)=B'_r$.

For a function $u$ and a domain $\cD$, by $\mean{u}_\cD$ we mean the integral mean value of $u$ over $\cD$. That is, 
$$
\mean{u}_\cD=\dashint_{\cD}u=\frac1{|\cD|}\int_\cD u.
$$

The relation $A\lesssim B$ can be understood as $A\le CB$ for some constant $C>0$.

When there is no confusion, we drop the function space notation in norms for vectorial functions, e.g., $\|u\|_{L^2(B_1^+)}=\|u\|_{L^2(B_1^+;\R^m)}$ for $u\in L^2(B_1^+;\R^m)$.

\medskip

Below are precise definitions for $L^p$-mean oscillation and vanishing mean oscillation.

\begin{definition}
    For $p\in[1,\infty)$ and a domain $\cD\subset\R^n$, let $g\in L^p(\cD)$. Then
    $$
    \omega_{g,p}(r):=\sup_{x_0\in\cD}\left(\dashint_{B_r(x_0)\cap \cD}\left|g(x)-\mean{g}_{B_r(x_0)\cap\cD}\right|^pdx\right)^{1/p},\quad 0<r<\frac{\text{diam}(\cD)}2,
    $$
    is the \emph{$L^p$-mean oscillation} of $g$ in $\cD$.

    We say that $g$ is \emph{of VMO} in $\cD$ if $\omega_{g,1}(r)\to0$ as $r\to0$.
\end{definition}

Due to Hölder's inequality, we have
\begin{align}
    \label{eq:mod-nondecr}
    p\longmapsto\omega_{g,p}\text{ is nondecreasing}.
\end{align}
Moreover, $r\longmapsto \omega_{g,p}(r)$ is almost nondecreasing, i.e., for some constants $C>c>0$, depending only on $n$ and $p$,
\begin{align}
    \label{eq:mod-alm-nondecr}
    c\,\omega_{g,p}(r)\le \omega_{g,p}(s)\le C\omega_{g,p}(r)
\end{align}
whenever $r/2\le s\le r<\frac{\diam(\cD)}2$. See e.g. \cite{Li17}.

\medskip

The following properties can be directly checked.

\begin{lemma}
    \label{lem:DMO-proper}
    Let $p\in[1,\infty)$ and $\cD$ be a domain in $\R^n$. If $f$ and $g$ are of $L^p$-DMO, then so are $f+g$ and $fg$. Moreover, 
    \begin{align*}
        &\omega_{f+g,p}\le \omega_{f,p}+\omega_{g,p},\\
        &\omega_{fg,p}\le C\left(\|f\|_{L^\infty(\cD)}\omega_{g,p}+\|g\|_{L^\infty(\cD)}\omega_{f,p}\right),
    \end{align*}
    where $C=C(p)>0$.
\end{lemma}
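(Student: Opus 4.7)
The plan is to obtain both bounds by a direct $L^p$ computation; the hypothesis that $f$ and $g$ are of $L^p$-DMO then transfers to $f+g$ and $fg$ upon integrating $\omega(r)/r$, together with the trivial observation that $f,g\in L^\infty(\cD)$ forces $f+g,fg\in L^\infty(\cD)\subset L^p_{\mathrm{loc}}(\cD)$ so that the moduli are well-defined.

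For the sum, I would fix $x_0\in\cD$, set $B=B_r(x_0)\cap\cD$, and use linearity of the average to write
\[
 (f+g)(x)-\mean{f+g}_B=\bigl(f(x)-\mean{f}_B\bigr)+\bigl(g(x)-\mean{g}_B\bigr).
\]
Minkowski's inequality in $L^p\bigl(B,\tfrac{dx}{|B|}\bigr)$, followed by the supremum over $x_0$, immediately yields $\omega_{f+g,p}\le \omega_{f,p}+\omega_{g,p}$.

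For the product, I would start from the elementary algebraic identity
\[
 (fg)(x)-\mean{f}_B\mean{g}_B=\bigl(f(x)-\mean{f}_B\bigr)g(x)+\mean{f}_B\bigl(g(x)-\mean{g}_B\bigr).
\]
Combining $|g(x)|\le\|g\|_{L^\infty(\cD)}$ and $|\mean{f}_B|\le\|f\|_{L^\infty(\cD)}$ with Minkowski's inequality in $L^p(B)$ yields
\[
 \Bigl(\dashint_B |fg-\mean{f}_B\mean{g}_B|^p\,dx\Bigr)^{1/p}\le \|g\|_{L^\infty(\cD)}\omega_{f,p}(r)+\|f\|_{L^\infty(\cD)}\omega_{g,p}(r).
\]
To pass from $\mean{f}_B\mean{g}_B$ to $\mean{fg}_B$, I would use that for any constant $c$ and any $h\in L^p(B)$,
\[
 \Bigl(\dashint_B |h-\mean{h}_B|^p\Bigr)^{1/p}\le \Bigl(\dashint_B |h-c|^p\Bigr)^{1/p}+|c-\mean{h}_B|,
\]
and that with $h=fg$, $c=\mean{f}_B\mean{g}_B$, the last term is itself bounded by the first summand via Jensen's inequality applied to $|\mean{h}_B-c|=\bigl|\dashint_B(h-c)\bigr|\le\bigl(\dashint_B|h-c|^p\bigr)^{1/p}$. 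This costs only a factor of two, giving the claim with $C=2$.

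There is no substantive obstacle here: the lemma is essentially the statement that Minkowski's inequality and the product-rule decomposition are stable under taking $L^p$-means. The only minor subtlety is the factor-of-two loss in the product bound, arising because $\mean{fg}_B\ne\mean{f}_B\mean{g}_B$ in general; it is absorbed into the constant $C=C(p)$.
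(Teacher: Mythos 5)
Your argument is correct: Minkowski's inequality handles the sum, and the decomposition $(fg)-\mean{f}_B\mean{g}_B=(f-\mean{f}_B)g+\mean{f}_B(g-\mean{g}_B)$ together with the standard factor-of-two comparison between $\mean{fg}_B$ and an arbitrary constant gives the product bound with $C=2$, from which the Dini property of $\omega_{fg,p}$ and $\omega_{f+g,p}$ follows at once. The paper states this lemma without proof (``can be directly checked''), and your verification is exactly the intended routine computation.
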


For $p\in[1,\infty)$, we say that $g$ is of $L^p$-DMO if $\omega_{g,p}$ is a Dini function, i.e.,
$$
\int_0^r\frac{\omega_{g,p}(\rho)}\rho d\rho<\infty.
$$
Clearly, the VMO condition is strictly weaker than the DMO condition.

\medskip

Next, we provide assumptions on the coefficients.

\begin{condition}
    [Conditions on coefficients for systems]
The coefficients $A^{\al\be}=(a^{\al\be}_{ij})^m_{i,j=1}$, $1\le\al,\be\le n$, satisfies for some constant $\la>0$
\begin{align}
\label{eq:assump-coeffi}
\begin{cases}
    \text{the Legendre-Hadamard condition: }\,\,\,\lambda|\xi|^2|\mu|^2\le a^{\al\be}_{ij}(x)\xi_\alpha\xi_\beta \mu_i\mu_j,\\
    \qquad\qquad\qquad\qquad\qquad\qquad\qquad\qquad\xi\in\R^{n},\,\,\, \mu\in\R^m,\,\,\,x\in \Omega\cap B_1, \\
    \text{the uniform boundedness: }\,\,\,|A^{\al\be}(x)|\le 1/{\lambda},\quad x\in \Omega\cap B_1.
\end{cases}
\end{align}
\end{condition}

\begin{condition}
    [Conditions on coefficients for scalar equations]
The coefficients $A=(a_{ij})^n_{i,j=1}$ satisfies for some constant $\la>0$
\begin{align}
\label{eq:assump-coeffi-scalar}
\begin{cases}
    \text{the uniform ellipticity: }\,\,\,\lambda|\xi|^2\le \mean{A(x)\xi,\xi},\quad\xi\in\R^n,\,\,\,x\in \Omega\cap B_1, \\
    \text{the uniform boundedness: }\,\,\,|A(x)|\le 1/\lambda,\quad x\in \Omega\cap B_1.
\end{cases}
\end{align}
\end{condition}

The following version of the Morrey inequality can be derived with a slight modification of the proof of Theorem~4 in Section 5.6.2 of \cite{Eva98}.

\begin{lemma}
    \label{lem:Morrey} 
    Suppose $p>n$ and $u\in W^{1,p}(B_2^+;\R^m)$. Then there is a constant $C>0$, depending only on $n$ and $p$, such that for any $x,y\in B_2^+$ with $r:=|x-y|>0$,
    $$
    \frac{|u(x)-u(y)|}{|x-y|}\le C\left(\dashint_{B_r^+(x)\cap B_2^+}|\D u|^p\right)^{1/p}.
    $$
\end{lemma}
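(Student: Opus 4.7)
The plan is to follow the classical proof of Morrey's inequality (Evans, Theorem~4 in Section~5.6.2), adapted to the half-ball geometry. Note first that $B_r^+(x)\cap B_2^+=B_r(x)\cap B_2^+$, since $B_2^+\subset\{y_n>0\}$; denote this set by $V$. It is convex (as an intersection of three convex sets), contains $x$, has diameter at most $2r$, and, as will be verified, satisfies $|V|\ge c(n)\,r^n$ uniformly in $x\in B_2^+$ and $r\in(0,4]$.

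The key ingredient is a Riesz-potential bound valid for every $z\in V$:
\begin{equation*}
|u(z)-\mean{u}_V|\le C(n)\,\frac{(\diam V)^n}{|V|}\int_V\frac{|\D u(w)|}{|z-w|^{n-1}}\,dw.
\end{equation*}
This is derived exactly as in Evans's Lemma~2 of Section~5.6.2: start from the identity $u(z)-u(w)=-\int_0^1\D u(z+t(w-z))\cdot(z-w)\,dt$, valid because convexity of $V$ keeps the segment $[z,w]$ inside $V$; average against $w\in V$; and change to polar coordinates centered at $z$. Combined with H\"older's inequality---using $p>n$, so that $(n-1)p/(p-1)<n$ and the kernel $|z-w|^{-(n-1)}$ is locally in $L^{p/(p-1)}$---together with the measure bound on $V$, this produces the principal estimate
\begin{equation*}
|u(z)-\mean{u}_V|\le C(n,p)\,r\Big(\dashint_V|\D u|^p\Big)^{1/p}\qquad\text{for all } z\in V.
\end{equation*}

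To handle the fact that $y$ sits on $\partial B_r(x)$ rather than in $V$, I would use intermediate points along the segment from $x$ to $y$: for $t\in[0,1)$, the point $z_t:=x+t(y-x)$ satisfies $|z_t-x|=tr<r$ and lies in $B_2^+$ by convexity of $B_2^+$, hence $z_t\in V$. Applying the principal estimate at $z=x$ and $z=z_t$ and using the triangle inequality gives $|u(x)-u(z_t)|\le 2C(n,p)\,r(\dashint_V|\D u|^p)^{1/p}$. Sending $t\to 1^-$ and invoking continuity of $u$ (granted by the Sobolev embedding $W^{1,p}(B_2^+)\hookrightarrow C^0(\overline{B_2^+})$ on the Lipschitz domain $B_2^+$, valid since $p>n$) yields the claimed inequality.

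The main technical point is the universal measure bound $|V|\ge c(n)\,r^n$. This follows from the uniform interior cone property of the Lipschitz domain $B_2^+$: for any $x\in B_2^+$ one can find a point $x_0$ with $B_{cr}(x_0)\subset V$ provided $r\le c_0$ for a universal threshold $c_0$ (built from the interior cone); for $r>c_0$, compactness gives a uniform lower bound $|V|\ge c'>0$, and the a priori bound $r\le 4$ yields $|V|/r^n\ge c'/4^n$. In either case the constant depends only on $n$, as required.
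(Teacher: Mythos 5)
Your argument is correct and is precisely the ``slight modification of the proof of Theorem~4 in Section~5.6.2 of Evans'' that the paper invokes without detail: replace the ball by the convex set $V=B_r(x)\cap B_2^+$, use the measure-density bound $|V|\gtrsim_n r^n$, and handle the endpoint $y\in\partial B_r(x)$ via continuity of the Sobolev representative. No gaps; the key checks (convexity of $V$, $(n-1)p/(p-1)<n$ from $p>n$, and the uniform lower volume bound) are all in place.
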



\section{Proofs of main theorems}\label{sec:main-thm}

\subsection{Proof of Theorem~\ref{thm:reg}}\label{sec:main-thm-1}
The objective of this section is to establish Theorem~\ref{thm:reg}. We first consider a simpler case with a flat boundary and a zero Dirichlet condition:
\begin{align}\label{eq:pde-flat}
        \begin{cases}
            D_\al(A^{\al\be}D_\be u)=\div\bff&\text{in }B_2^+,\\
            u=0&\text{on }B_2'.
        \end{cases}
    \end{align}

Throughout this section, we fix $p>n$ and $p_0>1$ satisfying $1/2<\frac1{p_0}+\frac1p<1$, and let $p_1:=\frac{p_0p}{p_0+p}$ so that $1<p_1<2$. 

We say that a positive constant is universal if it depends only on $n,m,\la,p$, $p_0$, and $\omega_{A,1}$.

For $x_0\in \overline{B_1^+}$ and $r\in(0,1/2)$, we set
\begin{align*}
    \phi(x_0,r):=\begin{cases}\inf_{\bq\in\R^{m\times n}}\left(\dashint_{B_r(x_0)}|\D u-\bq|^{p_1}\right)^{1/p_1}&\text{if }r<(x_0)_n,\\
        \inf_{q\in\R^m}\left(\dashint_{B_r^+(x_0)}|(\D_{x'}u,D_nu-q)|^{p_1}\right)^{1/p_1}&\text{if }r\ge (x_0)_n.
    \end{cases}
\end{align*}
We take $\bq_{x_0,r}=(\bq'_{x_0,r}, (\bq_{x_0,r})_n)\in\R^{m\times (n-1)}\times \R^{m\times 1}$ such that
$$
\phi(x_0,r)=\left(\dashint_{B_r^+(x_0)}|\D u-\bq_{x_0,r}|^{p_1}\right)^{1/p_1}.
$$
Note that $\bq_{x_0,r}'=0$ when $r\ge (x_0)_n$. For a small constant $\ka\in(0,1/16)$ that will be determined after Lemma~\ref{lem:int-diff-est}, we let
$$
\Phi(x_0,r):=\sum_{j=0}^{j_0^r}\phi(x_0,\ka^{-j}r),\quad\text{where $j_0^r$ is the largest integer satisfying $\ka^{-j_0^r}r\le 1/2$}.
$$

\begin{lemma}\label{lem:grad-est}
    Let $u\in W^{1,p_1}(B_2^+;\R^m)$, $x_0\in \overline{B_{1/2}^+}$, and $r\in(0,1/8)$. Then
    \begin{align}\label{eq:grad-est}
        \left(\dashint_{B_r^+(x_0)}|\D u|^{p_1}\right)^{1/p_1}\le C\ka^{-\frac{n}{p_1}}\left(\Phi(x_0,r)+\|\D u\|_{L^{p_1}(B_1^+)}\right)
    \end{align}
    for some constant $C=C(n,p_1)>0$.
\end{lemma}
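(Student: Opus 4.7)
The plan is a Campanato-type dyadic telescoping. Writing $\D u = (\D u - \bq_{x_0,r}) + \bq_{x_0,r}$ on $B_r^+(x_0)$, the first summand has $L^{p_1}$-average exactly $\phi(x_0,r)$, so everything reduces to a pointwise bound on the constant matrix $\bq_{x_0,r}$, which I will obtain by telescoping up to the largest available scale $R := \ka^{-j_0^r}r \in (\ka/2,\,1/2]$.

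The workhorse estimate compares two consecutive minimizers. Since $\bq_{x_0,\ka^{-j}r} - \bq_{x_0,\ka^{-(j+1)}r}$ is a constant matrix, its $p_1$-th power equals its own average over the smaller half-ball $B_{\ka^{-j}r}^+(x_0)$. Inserting $\D u$ and applying the triangle inequality splits it into the $L^{p_1}$-averages of $|\D u - \bq_{x_0,\ka^{-j}r}|^{p_1}$ and $|\D u - \bq_{x_0,\ka^{-(j+1)}r}|^{p_1}$ over $B_{\ka^{-j}r}^+(x_0)$. The first equals $\phi(x_0,\ka^{-j}r)^{p_1}$ by the definition of $\bq_{x_0,\ka^{-j}r}$; for the second I enlarge the domain of integration to $B_{\ka^{-(j+1)}r}^+(x_0)$ and pay the volume ratio $|B_{\ka^{-(j+1)}r}^+(x_0)|/|B_{\ka^{-j}r}^+(x_0)| \le C\ka^{-n}$, which is uniform in $x_0$ because $B_s^+(x_0)$ is always at least a half-ball of radius $s$. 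This yields at most $C\ka^{-n}\phi(x_0,\ka^{-(j+1)}r)^{p_1}$. Taking $1/p_1$ powers and summing telescopically from $j=0$ to $j_0^r-1$ gives
\[
|\bq_{x_0,r}| \le |\bq_{x_0,R}| + C\ka^{-n/p_1}\,\Phi(x_0,r).
\]
The residual $|\bq_{x_0,R}|$ is controlled by the same trick on $B_R^+(x_0)$: it is at most $C\phi(x_0,R) + C\bigl(\dashint_{B_R^+(x_0)}|\D u|^{p_1}\bigr)^{1/p_1}$, and the last quantity is $\le CR^{-n/p_1}\|\D u\|_{L^{p_1}(B_1^+)} \le C\ka^{-n/p_1}\|\D u\|_{L^{p_1}(B_1^+)}$ since $R>\ka/2$ and $B_R^+(x_0)\subset B_1^+$. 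Assembling everything and absorbing $\phi(x_0,r)$ and $\phi(x_0,R)$ into $\Phi(x_0,r)$ yields \eqref{eq:grad-est}.

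The only bookkeeping subtlety, rather than a real obstacle, is the case split in the definition of $\phi$: when $r$ crosses $(x_0)_n$, the minimizer $\bq_{x_0,r}$ changes from a full $m\times n$ matrix to one with $\bq'_{x_0,r}=0$. But the convention treats $\bq_{x_0,r}$ uniformly as an element of $\R^{m\times n}$ and $|\D u - \bq|$ is interpreted accordingly, so the triangle inequality used in the telescoping is valid across the transition scale and $\dashint_{B_s^+(x_0)}|\D u - \bq_{x_0,s}|^{p_1} = \phi(x_0,s)^{p_1}$ holds by definition in both regimes. This makes the scale-by-scale estimate uniform and completes the plan.
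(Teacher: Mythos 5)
Your proof is correct and follows essentially the same route as the paper's: triangle inequality to reduce to $|\bq_{x_0,r}|$, telescoping over the scales $\ka^{-j}r$ with the volume-ratio factor $\ka^{-n/p_1}$ when enlarging the domain, and controlling the top-scale constant $\bq_{x_0,\ka^{-j_0^r}r}$ by $\|\D u\|_{L^{p_1}(B_1^+)}$ since that scale is bounded below by $\ka/2$. Your remark on the case split in the definition of $\phi$ is a correct (and welcome) clarification of a point the paper leaves implicit.
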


\begin{proof}
For simplicity, we write $j_0=j_0^r$. Then
\begin{align}
    \label{eq:grad-est-1}
    \begin{split}
        &\left(\dashint_{B_r^+(x_0)}|\D u|^{p_1}\right)^{1/p_1}\\
        &\le \left(\dashint_{B_r^+(x_0)}|\D u-\bq_{x_0,r}|^{p_1}\right)^{1/p_1}+\sum_{j=1}^{j_0}|\bq_{x_0,\ka^{-j+1}r}-\bq_{x_0,\ka^{-j}r}|+|\bq_{x_0,\ka^{-j_0}r}|.
    \end{split}
\end{align}
Here we assume that the second term vanishes when $j_0=0$.
Regarding the first term in the right-hand side in \eqref{eq:grad-est-1}, we simply have
\begin{align}
    \label{eq:grad-est-2}
    \left(\dashint_{B_r^+(x_0)}|\D u-\bq_{x_0,r}|^{p_1}\right)^{1/p_1}=\phi(x_0,r)\le\Phi(x_0,r).
\end{align}
Concerning the second term, we compute
\begin{align}
    \label{eq:grad-est-3}
    \begin{split}
    &\sum_{j=1}^{j_0}|\bq_{x_0,\ka^{-j+1}r}-\bq_{x_0,\ka^{-j}r}|\\
    &\le \sum_{j=1}^{j_0}\left(\left(\dashint_{B^+_{\ka^{-j+1}r}(x_0)}|\D u-\bq_{x_0,\ka^{-j+1}r}|^{p_1}\right)^{1/p_1}+\left(\dashint_{B^+_{\ka^{-j+1}r}(x_0)}|\D u-\bq_{x_0,\ka^{-j}r}|^{p_1}\right)^{1/p_1}\right)\\
    &\le \sum_{j=1}^{j_0}\left(\phi(x_0,\ka^{-j+1}r)+\ka^{-\frac{n}{p_1}}\phi(x_0,\ka^{-j}r)\right)\\
    &\le C\ka^{-\frac{n}{p_1}}\Phi(x_0,r).
\end{split}\end{align}
Finally, for the last term,
\begin{align}
    \label{eq:grad-est-4}
    \begin{split}
        &|\bq_{x_0,\ka^{-j_0}r}|\\
        &\le \left(\dashint_{B^+_{\ka^{-j_0}r}(x_0)}|\D u-\bq_{x_0,\ka^{-j_0}r}|^{p_1}\right)^{1/p_1}+\left(\dashint_{B^+_{\ka^{-j_0}r}(x_0)}|\D u|^{p_1}\right)^{1/p_1}\\
        &\le \phi(x_0,\ka^{-j_0}r)+C\ka^{-\frac{n}{p_1}}\|\D u\|_{L^{p_1}(B_1^+)}\le \Phi(x_0,r)+C\ka^{-\frac{n}{p_1}}\|\D u\|_{L^{p_1}(B_1^+)}.
    \end{split}
\end{align}
By combining \eqref{eq:grad-est-1}-\eqref{eq:grad-est-4}, we conclude \eqref{eq:grad-est}.    
\end{proof}

\begin{lemma}\label{lem:bdry-diff-est}
    Let $u:B_2^+\to\R^m$ be a $H^1$-weak solution of \eqref{eq:pde-flat}. Suppose that $A^{\al\be}$ is of VMO and satisfies \eqref{eq:assump-coeffi} in $B_2^+$, and $\bff\in L^\infty(B_2^+;\R^{m\times n})$. Then there exists a universal constant $C_1>0$ such that for any $\bar x_0\in B_{1/2}'$ and $r\in(0,1/8)$,
    \begin{align}
        \label{eq:bdry-diff-est}\begin{split}
        \phi(\bar x_0,\ka r)&\le C_1\ka\phi(\bar x_0,r)+C_1\ka^{-\frac{n}{p_1}}\omega_{\bff,p}(r)\\
        &\qquad+C_1\ka^{-\frac{2n}{p_1}}\left(\Phi(x_0,r)+\|\D u\|_{L^{p_1}(B_1^+)}\right)\omega_{A,p_0}(r).
    \end{split}\end{align}
\end{lemma}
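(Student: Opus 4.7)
I would argue by a standard coefficient-freezing/harmonic-replacement scheme. Fix $\bar x_0\in B_{1/2}'$ and $r\in(0,1/8)$, set $\bar A^{\alpha\beta}:=\mean{A^{\alpha\beta}}_{B_r^+(\bar x_0)}$, and decompose $u=v+w$, where $v$ solves the frozen Dirichlet problem $D_\alpha(\bar A^{\alpha\beta}D_\beta v)=0$ in $B_r^+(\bar x_0)$ with $v=u$ on $\partial B_r^+(\bar x_0)$; since $u\equiv 0$ on $B_r'(\bar x_0)$, so is $v$, and $w=u-v$ vanishes on all of $\partial B_r^+(\bar x_0)$ and satisfies
\begin{align*}
D_\alpha(\bar A^{\alpha\beta}D_\beta w)=D_\alpha\bigl[(\bar A^{\alpha\beta}-A^{\alpha\beta})D_\beta u+f_\alpha-\mean{f_\alpha}_{B_r^+(\bar x_0)}\bigr].
\end{align*}
To keep everything compatible with the boundary condition after subtracting the best affine correction, I will work with $U:=u-\bar q_{\bar x_0,r}\cdot x$ and $V:=v-\bar q_{\bar x_0,r}\cdot x$, both still vanishing on $B_r'(\bar x_0)$ since $\bar q'_{\bar x_0,r}=0$. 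Then $W:=U-V=w$, $(\dashint_{B_r^+(\bar x_0)}|\nabla U|^{p_1})^{1/p_1}=\phi(\bar x_0,r)$, and $V$ still solves the constant-coefficient equation with zero data on $B_r'(\bar x_0)$.

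For the smooth part $V$, classical regularity for constant-coefficient systems with zero Dirichlet data on a flat portion (tangential difference quotients plus using the PDE to recover $D_{nn}^2 V$) yields the pointwise bound
\begin{align*}
\|\nabla^2 V\|_{L^\infty(B_{r/2}^+(\bar x_0))}\le Cr^{-1}\left(\dashint_{B_r^+(\bar x_0)}|\nabla V|^{p_1}\right)^{1/p_1},
\end{align*}
and $D_{x'}V(\bar x_0)=0$ because $V\equiv 0$ on $B_r'(\bar x_0)$. Using this as a Lipschitz bound for $\nabla V$ around $\bar x_0$ with the test value $q:=D_n V(\bar x_0)$, then rescaling the $\nabla W$ piece from $B_{\kappa r}^+$ to $B_r^+$ and applying the triangle inequality $(\dashint|\nabla V|^{p_1})^{1/p_1}\le\phi(\bar x_0,r)+(\dashint|\nabla W|^{p_1})^{1/p_1}$, I obtain
\begin{align*}
\phi(\bar x_0,\kappa r)\le C\kappa\,\phi(\bar x_0,r)+C\kappa^{-n/p_1}\left(\dashint_{B_r^+(\bar x_0)}|\nabla W|^{p_1}\right)^{1/p_1}.
\end{align*}

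For the remainder $W$, the $L^{p_1}$ Calder\'on--Zygmund estimate for the constant-coefficient system on the half-ball with zero Dirichlet data gives
\begin{align*}
\left(\dashint_{B_r^+(\bar x_0)}|\nabla W|^{p_1}\right)^{1/p_1}\le C\left(\dashint_{B_r^+(\bar x_0)}|(\bar A-A)\nabla u|^{p_1}\right)^{1/p_1}+C\omega_{\bff,p}(r).
\end{align*}
Writing $\nabla u=\nabla U+\bar q_{\bar x_0,r}$ and applying H\"older's inequality with $1/p_1=1/p+1/p_0$ to the $\nabla U$ piece (while using the $L^\infty$-bound on $\bar A-A$ for the $\bar q$ piece), the first term on the right is bounded by $C\omega_{A,p_0}(r)\bigl[(\dashint_{B_r^+(\bar x_0)}|\nabla U|^p)^{1/p}+|\bar q_{\bar x_0,r}|\bigr]$. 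A local boundary $W^{1,p}$-estimate for divergence-form systems with VMO coefficients applied to $U$ then converts $(\dashint|\nabla U|^p)^{1/p}$ into $(\dashint_{B_{2r}^+(\bar x_0)}|\nabla U|^{p_1})^{1/p_1}$ plus a data term, and combined with Lemma~\ref{lem:grad-est} (together with the triangle inequality to handle $|\bar q_{\bar x_0,r}|$) yields a bound $\le C\kappa^{-n/p_1}(\Phi(\bar x_0,r)+\|\nabla u\|_{L^{p_1}(B_1^+)})$. Feeding this back into the previous display produces \eqref{eq:bdry-diff-est}; the two compounded $\kappa^{-n/p_1}$ factors (one from rescaling, one from Lemma~\ref{lem:grad-est}) give rise to the $\kappa^{-2n/p_1}$ in front of $\omega_{A,p_0}(r)$.

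The principal technical obstacle is the $L^p\to L^{p_1}$ reduction on $\nabla U$: it relies on a local boundary $W^{1,p}$-estimate for divergence-form systems with VMO coefficients whose constants are uniform in the scale $r$, depending only on the smallness of $\omega_{A,1}$ at a fixed reference scale. This is available from Dong--Kim-type theory but needs to be threaded carefully through the iteration to keep the correct dependence on $\kappa$; it is also the reason the hypothesis enforces $p_0>1$ strictly.
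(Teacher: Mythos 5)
Your argument is essentially the paper's: freeze the coefficients, split $u$ into a constant-coefficient piece with zero flat-boundary data (controlled via tangential second derivatives plus the equation for $D_{nn}$) and a remainder controlled by the $W^{1,p_1}$ estimate, use H\"older with $1/p_1=1/p+1/p_0$ to produce $\omega_{A,p_0}(r)$ times an $L^p$-average of $\nabla u$, reduce that to an $L^{p_1}$-average by the VMO $W^{1,p}$ estimate, and finish with Lemma~\ref{lem:grad-est}; the affine subtraction $U=u-\bq_{\bar x_0,r}\cdot x$ is only a cosmetic repackaging of the paper's arbitrary $q$. The one place where the paper is more careful is the domain for the auxiliary Dirichlet problem: it poses the $w$-equation on a smooth convex domain $\cD$ with $B^+_{r/3}\subset\cD\subset B^+_{r/2}$ rather than on the half-ball itself, because the global $W^{1,p_1}$ estimate with $p_1$ possibly close to $1$ is not guaranteed on a domain with an edge, so your harmonic replacement on $B_r^+(\bar x_0)$ should be routed through such an intermediate smooth domain.
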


\begin{proof}
Without loss of generality, we may assume that $\bar x_0=0$. We write for simplicity
$$
\bar A^{\al\be}:=\mean{A^{\al\be}}_{B_{r/2}^+},\quad \bar\bff:=\mean{\bff}_{B_{r/2}^+}
$$
and note that
\begin{align*}
    D_\al(\bar A^{\al\be}D_\be u)=D_\al(\bff_\al-\bar\bff_\al+(\bar A^{\al\be}-A^{\al\be})D_\be u).
\end{align*}
Let $\cD$ be a smooth and convex domain in $\R^n$ satisfying $B^+_{r/3}\subset\cD\subset B^+_{r/2}$. Let $w$ be the weak solution of
\begin{align*}
    \begin{cases}
        D_\al(\bar A^{\al\be}D_\be w)=D_\al(\bff_\al-\bar\bff_\al+(\bar A^{\al\be}-A^{\al\be})D_\be u)&\text{in }\cD,\\
        w=0&\text{on }\partial \cD.
    \end{cases}
\end{align*}
Then we have by the $W^{1,p_1}$ estimate and Hölder's inequality,
\begin{align*}
    \left(\dashint_{B_{r/4}^+}|\D w|^{p_1}\right)^{1/p_1}\lesssim \omega_{\bff,p_1}(r/2)+\omega_{A,p_0}(r/2)\left(\dashint_{B_{r/2}^+}|\D u|^{p}\right)^{1/p}.
\end{align*}
Moreover, since $u$ solves $D_\al(A^{\al\be}D_\be u)=\div\left(\bff-\mean{\bff}_{B_{r}^+}\right)$ in $B_{r}^+$ and $A^{\al\be}$ has vanishing mean oscillations, we have by the $W^{1,p}$ estimate and the boundary Poincaré inequality,
\begin{align*}
    \left(\dashint_{B_{r/2}^+}|\D u|^{p}\right)^{1/p}\lesssim \left(\dashint_{B_{r}^+}|\D u|^{p_1}\right)^{1/p_1}+\omega_{\bff,p}(r).
\end{align*}
Combining the previous two estimates and using \eqref{eq:mod-nondecr} and \eqref{eq:mod-alm-nondecr} yield
\begin{align}
    \label{eq:w-est-bdry}
    \begin{split}
        \left(\dashint_{B_{r/4}^+}|\D w|^{p_1}\right)^{1/p_1}&\lesssim \omega_{\bff,p_1}(r/2)+\omega_{A,p_0}(r/2)\left(\left(\dashint_{B_{r}^+}|\D u|^{p_1}\right)^{1/p_1}+\omega_{\bff,p}(r)\right)\\
        &\lesssim \omega_{\bff,p}(r)+\omega_{A,p_0}(r)\left(\dashint_{B_{r}^+}|\D u|^{p_1}\right)^{1/p_1}.
    \end{split}
\end{align}
Next, we observe that $v:=u-w$ satisfies
\begin{align*}
    \begin{cases}
        D_\al(\bar A^{\al\be}D_\be v)=0&\text{in }B_{r/4}^+,\\
        v=0&\text{on }B_{r/4}'.
    \end{cases}
\end{align*}
By the gradient estimate for elliptic systems with constant coefficients,
$$
\|Dv\|_{L^\infty(B^+_{r/8})}\lesssim \frac1r\left(\dashint_{B^+_{r/4}}|v|^{p_1}\right)^{1/p_1}.
$$
Since $\D_{x'}v$ satisfies the same system, we have
$$
\|DD_{x'}v\|_{L^\infty(B_{r/8}^+)}\lesssim \frac1r\left(\dashint_{B_{r/4}^+}|\D_{x'}v|^{p_1}\right)^{1/p_1}.
$$
Since 
$$
D_{nn}v=-(\bar A^{nn})^{-1}\sum_{(\al,\be)\neq(n,n)}\bar A^{\al\be}D_{\al\be}v,
$$
we further have
\begin{align*}
    \|D^2v\|_{L^\infty(B_{r/8}^+)}\lesssim \frac1r\left(\dashint_{B_{r/4}^+}|\D_{x'}v|^{p_1}\right)^{1/p_1}\lesssim\frac1r\left(\dashint_{B_{r/4}^+}|(\D_{x'}v, D_nv-q)|^{p_1}\right)^{1/p_1}
\end{align*}
for any $q\in\R^m$. This, along with the fact that $\D_{x'}v=0$ on $B'_{r/4}$, gives
\begin{align*}
    \left(\dashint_{B_{\ka r}^+}|(\D_{x'}v, D_nv-\mean{D_nv}_{B_{\ka r}^+})|^{p_1}\right)^{1/p_1}&\le 2\ka r\|D^2v\|_{L^\infty(B_{r/8}^+)}\\
    &\le C\ka\left(\dashint_{B_{r/4}^+}|(\D_{x'}v, D_nv-q)|^{p_1}\right)^{1/p_1}.
\end{align*}
By combining this with \eqref{eq:w-est-bdry} and using the triangle inequality, we obtain
\begin{align*}
    &\left(\dashint_{B_{\ka r}^+}|(\D_{x'}u,D_nu-\mean{D_nv}_{B_{\ka r}^+})|^{p_1}\right)^{1/p_1}\\
    &\le C\left(\dashint_{B_{\ka r}^+}|(\D_{x'}v,D_nv-\mean{D_nv}_{B_{\ka r}^+})|^{p_1}\right)^{1/p_1}+C\left(\dashint_{B_{\ka r}^+}|\D w|^{p_1}\right)^{1/p_1}\\
    &\le C\ka \left(\dashint_{B_{r/4}^+}|(\D_{x'}v,D_nv-q)|^{p_1}\right)^{1/p_1}+C\left(\dashint_{B_{\ka r}^+}|\D w|^{p_1}\right)^{1/p_1}\\
    &\le C\ka\left(\dashint_{B_{r/4}^+}|(\D_{x'}u,D_nu-q)|^{p_1}\right)^{1/p_1}+C\ka^{-\frac{n}{p_1}}\left(\dashint_{B_{r/4}^+}|\D w|^{p_1}\right)^{1/p_1}\\
    &\le C\ka\left(\dashint_{B_r^+}|(\D_{x'}u,D_nu-q)|^{p_1}\right)^{1/p_1}+C\ka^{-\frac{n}{p_1}}\omega_{\bff,p}(r)\\
    &\qquad+C\ka^{-\frac{n}{p_1}}\left(\dashint_{B_{r}^+}|\D u|^{p_1}\right)^{1/p_1}\omega_{A,p_0}(r).
\end{align*}
Since $q\in\R^m$ is arbitrary, \eqref{eq:bdry-diff-est} follows by applying Lemma~\ref{lem:grad-est}.    
\end{proof}

\begin{lemma}
    \label{lem:int-diff-est}
    Let $u$, $A^{\al\be}$, and $\bff$ be in Lemma~\ref{lem:bdry-diff-est}. Then there is a universal constant $C_2>0$ such that if $x_0\in B_{1/2}^+$ and $r\in(0,1/8)$ with $B_r(x_0)\subset B_1^+$, then
    \begin{align}
        \label{eq:int-diff-est}
        \begin{split}
            \phi(x_0,\ka r)&\le C_2\ka\phi(x_0,r)+C_2\ka^{-\frac{n}{p_1}}\omega_{\bff,p}(r)\\
            &\qquad+C_2\ka^{-\frac{2n}{p_1}}\left(\Phi(x_0,r)+\|\D u\|_{L^{p_1}(B_1^+)}\right)\omega_{A,p_0}(r).
        \end{split}
    \end{align}
\end{lemma}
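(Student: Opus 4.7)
The strategy is to transcribe the proof of Lemma~\ref{lem:bdry-diff-est} to the interior setting, replacing half-balls $B_r^+$ with full balls $B_r(x_0)$ and discarding everything that depended on the flat boundary. Because $B_r(x_0)\subset B_1^+$ forces $r<(x_0)_n$, both $\phi(x_0,r)$ and $\phi(x_0,\ka r)$ are the interior infima over $\bq\in\R^{m\times n}$, so the argument stays on the interior branch of $\phi$ throughout.

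After translating so that the relevant ball is $B_r(x_0)$, freeze the coefficients and data by $\bar A^{\al\be}:=\mean{A^{\al\be}}_{B_{r/2}(x_0)}$ and $\bar\bff:=\mean{\bff}_{B_{r/2}(x_0)}$, and define $w$ to be the $H^1_0$-solution in $B_{r/2}(x_0)$ of
\[
D_\al(\bar A^{\al\be}D_\be w)=D_\al\bigl(\bff_\al-\bar\bff_\al+(\bar A^{\al\be}-A^{\al\be})D_\be u\bigr).
\]
Combining the interior $W^{1,p_1}$ estimate for constant coefficient systems with Hölder's inequality (using $1/p_1=1/p_0+1/p$) controls $\|\D w\|_{L^{p_1}(B_{r/4}(x_0))}$ by $\omega_{\bff,p_1}(r/2)+\omega_{A,p_0}(r/2)\|\D u\|_{L^p(B_{r/2}(x_0))}$. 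To promote the $L^{p_1}$ bound on $\D u$ to an $L^p$ bound, one uses that constants have zero divergence to rewrite the equation as $D_\al(A^{\al\be}D_\be u)=\div(\bff-\mean{\bff}_{B_r(x_0)})$ in $B_r(x_0)$; the interior $W^{1,p}$ estimate for VMO-coefficient systems together with Poincaré's inequality then yields the interior analogue of \eqref{eq:w-est-bdry}.

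Set $v:=u-w$; then $v$ solves the constant coefficient Legendre--Hadamard system in $B_{r/2}(x_0)$. Since $\D v-\bq$ satisfies the same system for any $\bq\in\R^{m\times n}$, the standard interior $L^\infty$ estimate gives
\[
\|D^2 v\|_{L^\infty(B_{r/8}(x_0))}\lesssim\frac1r\Bigl(\dashint_{B_{r/4}(x_0)}|\D v-\bq|^{p_1}\Bigr)^{1/p_1},
\]
with no need to split tangential and normal derivatives, which is the main simplification relative to the boundary case. Combined with a first-order Taylor expansion on $B_{\ka r}(x_0)\subset B_{r/8}(x_0)$ (valid since $\ka<1/16$), this produces the Campanato-type contraction
\[
\Bigl(\dashint_{B_{\ka r}(x_0)}\bigl|\D v-\mean{\D v}_{B_{\ka r}(x_0)}\bigr|^{p_1}\Bigr)^{1/p_1}\le C\ka\Bigl(\dashint_{B_{r/4}(x_0)}|\D v-\bq|^{p_1}\Bigr)^{1/p_1}.
\]

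Writing $\D u=\D v+\D w$ and using the triangle inequality with $\bq=\bq_{x_0,r}$ gives $\phi(x_0,\ka r)\le C\ka\,\phi(x_0,r)+C\ka^{-n/p_1}(\dashint_{B_{r/4}(x_0)}|\D w|^{p_1})^{1/p_1}$; inserting the earlier bound on $\|\D w\|_{L^{p_1}}$ and converting the residual $\|\D u\|_{L^{p_1}(B_r(x_0))}$ into $\Phi(x_0,r)+\|\D u\|_{L^{p_1}(B_1^+)}$ via Lemma~\ref{lem:grad-est} produces \eqref{eq:int-diff-est}. The only step that merits verification is that the interior $W^{1,p}$ estimate for elliptic systems with VMO coefficients can absorb the right-hand side $\div(\bff-\mean{\bff}_{B_r(x_0)})$ in the precise mean-oscillation form used above; this is standard, and in fact somewhat simpler than the boundary analogue already implicitly used in the proof of Lemma~\ref{lem:bdry-diff-est}.
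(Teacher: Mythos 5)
Your proposal follows the paper's proof essentially verbatim: the same coefficient/data freezing on $B_{r/2}(x_0)$, the same auxiliary Dirichlet problem for $w$ with the $W^{1,p_1}$ estimate plus H\"older, the same mean-subtraction trick with the interior $W^{1,p}$ estimate and Poincar\'e to pass from $L^{p_1}$ to $L^p$ control of $\D u$, and the same second-derivative bound for $v=u-w$ (correctly noting that the tangential/normal splitting of the boundary case is unnecessary here) followed by Lemma~\ref{lem:grad-est}. This is correct and matches the paper's argument.
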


\begin{proof}
We follow the argument in Lemma~\ref{lem:bdry-diff-est}. Let
$$
\bar A^{\al\be}:=\mean{A^{\al\be}}_{B_{r/2}(x_0)},\quad \bar\bff:=\mean{\bff}_{B_{r/2}(x_0)}.
$$
We then let $w$ be the solution of
\begin{align*}
    \begin{cases}
        D_\al(\bar A^{\al\be}D_\be w)=D_\al(\bff_\al-\bar\bff_\al+(\bar A^{\al\be}-A^{\al\be})D_\be u)&\text{in }B_{r/2}(x_0),\\
        w=0&\text{on }\partial B_{r/2}(x_0).
    \end{cases}
\end{align*}
By applying Hölder's inequality and the $W^{1,p_1}$ estimate, we get
\begin{align*}
    \left(\dashint_{B_{r/2}(x_0)}|\D w|^{p_1}\right)^{1/p_1}\lesssim \omega_{\bff,p_1}(r/2)+\omega_{A,p_0}(r/2)\left(\dashint_{B_{r/2}(x_0)}|\D u|^{p}\right)^{1/p}.
\end{align*}
Since $u-\mean{u}_{B_r(x_0)}$ satisfies $D_\al(A^{\al\be}D_\be(u-\mean{u}_{B_r(x_0)}))=\div(\bff-\mean{\bff}_{B_r(x_0)})$ in $B_r(x_0)$, we have by the $W^{1,p}$ estimate and Poincaré inequality that
$$
\left(\dashint_{B_{r/2}(x_0)}|\D u|^{p}\right)^{1/p}\lesssim \left(\dashint_{B_r(x_0)}|\D u|^{p_1}\right)^{1/p_1}+\omega_{\bff,p}(r).
$$
By combining the preceding two estimates and arguing as in the proof of Lemma~\ref{lem:bdry-diff-est}, we deduce
\begin{align}
    \label{eq:w-est-int}
    \left(\dashint_{B_{r/2}(x_0)}|\D w|^{p_1}\right)^{1/p_1}\lesssim \omega_{\bff,p}(r)+\omega_{A,p_0}(r)\left(\dashint_{B_r(x_0)}|\D u|^{p_1}\right)^{1/p_1}.
\end{align}
Moreover, $v:=u-w$ is a solution of the homogeneous equation
$$
D_\al(\bar A^{\al\be}D_\be v)=0\qquad\text{in }B_{r/2}(x_0).
$$
For any $\bq\in \R^{m\times n}$, $\D v-\bq$ satisfies the same equation, which gives
$$
\|D^2v\|_{L^\infty(B_{r/4}(x_0))}\le \frac{C}{r}\left(\dashint_{B_{r/2}(x_0)}|\D v-\bq|^{p_1}\right)^{1/p_1}.
$$
It follows that
$$
\left(\dashint_{B_{\ka r}(x_0)}|\D v-\mean{\D v}_{B_{\ka r}(x_0)}|^{p_1}\right)^{1/p_1}\le C\ka\left(\dashint_{B_{r/2}(x_0)}|\D v-\bq|^{p_1}\right)^{1/p_1}.
$$
As we saw in the proof of Lemma~\ref{lem:bdry-diff-est}, this estimate, along with \eqref{eq:w-est-int} and Lemma~\ref{lem:grad-est}, concludes \eqref{eq:int-diff-est}.    
\end{proof}

We fix a universal constant $\ka\in(0,1/16)$ small so that
$$
C_1\ka\le1/2\quad\text{and}\quad C_2\ka\le1/2,
$$
where $C_1$ and $C_2$ are as in Lemmas~\ref{lem:bdry-diff-est} and \ref{lem:int-diff-est}, respectively.

For $y_0\in \overline{B_{1/2}^+}$ and $\rho\in(0,1/8)$, we define 
$$
\Phi^*(y_0,\rho):=\Phi(y_0,\rho)-\frac12\Phi(y_0,\ka^{-1}\rho).
$$
Note that $\frac12\Phi(y_0,\rho)\le\Phi^*(y_0,\rho)\le\Phi(y_0,\rho)$.

Also, we let

$$
\eta(\rho):=\left(\|u\|_{L^2(B_2^+)}+\int_\rho^1\frac{\omega_{\bff,p}(s)}sds\right)\exp\left(C\int_\rho^1\frac{\omega_{A,p_0}(s)}sds\right),
$$
where $C>0$ is a universal constant to be specified.

The following proposition plays a key role in establishing our main result, Theorem~\ref{thm:reg}.

\begin{proposition}\label{prop:grad-est}
    Let $u$, $A^{\al\be}$, and $\bff$ be as in Lemma~\ref{lem:bdry-diff-est}. If $x_0\in B_{1/2}^+$ and $r\in(0,1/8)$, then
    \begin{align}\label{eq:gradient-est}
    \left(\dashint_{B_r^+(x_0)}|\D u|^{p_1}\right)^{1/p_1}\le C\eta(r)
    \end{align}
    for some universal constant $C>0$.
\end{proposition}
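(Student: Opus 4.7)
The plan is first to use Lemma~\ref{lem:grad-est} to reduce the problem to bounding $\Phi(x_0,r)+M$, where $M:=\|\nabla u\|_{L^{p_1}(B_1^+)}$. A standard Caccioppoli/energy estimate for \eqref{eq:pde-flat}, using the zero Dirichlet condition on $B_2'$, controls $M$ by $C(\|u\|_{L^2(B_2^+)}+\|\bff\|_{L^2(B_2^+)})\le C\eta(r)$, so it remains to show $\Phi(x_0,r)\le C\eta(r)$. The starting point is a uniform one-scale recursion extracted from Lemmas~\ref{lem:bdry-diff-est} and \ref{lem:int-diff-est}: since $\ka$ was fixed with $C_1\ka,C_2\ka\le 1/2$, at every dyadic scale $s_j:=\ka^{-j}r$, $j=0,\ldots,j_0^r$,
\[
\phi(x_0,\ka s_j)\le \tfrac12\phi(x_0,s_j)+C\omega_{\bff,p}(s_j)+C\bigl(\Phi(x_0,s_j)+M\bigr)\omega_{A,p_0}(s_j).
\]
When $s_j<(x_0)_n$ (so that $B_{s_j}(x_0)\subset B_1^+$) this is the interior version applied at $x_0$; when $s_j\ge(x_0)_n$ I would invoke the boundary version at the projected point $\bar x_0:=((x_0)',0)$ and use that $B^+_{s_j}(x_0)$ and $B^+_{s_j}(\bar x_0)$ are comparable up to universal constants to transfer the estimate back to $\phi(x_0,\cdot)$.

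The decisive algebraic step is the $\Phi^*$-rewrite. Setting $P_j:=\Phi(x_0,s_j)$ and $a_j:=\phi(x_0,s_j)$, one has $P_{j-1}=a_{j-1}+P_j$ and
\[
\Phi^*(x_0,s_j)=P_j-\tfrac12 P_{j+1}=\tfrac12(P_j+a_j)\in[\tfrac12 P_j,\,P_j].
\]
Adding $\tfrac12 P_j$ to both sides of the one-scale inequality and then using $P_j\le 2\Phi^*(x_0,s_j)$ in the error term produces the discrete Gronwall recursion
\[
\Phi^*(x_0,s_{j-1})\le\bigl(1+2C\omega_{A,p_0}(s_j)\bigr)\Phi^*(x_0,s_j)+C\omega_{\bff,p}(s_j)+CM\omega_{A,p_0}(s_j).
\]
Iterating from $j=j_0^r$ down to $j=1$, the product $\prod_k(1+2C\omega_{A,p_0}(s_k))$ is at most $\exp\bigl(C\sum_k\omega_{A,p_0}(s_k)\bigr)$, and by the almost-monotonicity \eqref{eq:mod-alm-nondecr} the sums $\sum_j\omega_{\bff,p}(s_j)$ and $\sum_j\omega_{A,p_0}(s_j)$ are respectively dominated by $C\int_r^1\omega_{\bff,p}(s)/s\,ds$ and $C\int_r^1\omega_{A,p_0}(s)/s\,ds$; the base term $\Phi^*(x_0,s_{j_0^r})$ consists of a single $\phi$ at a scale comparable to $1$ and is therefore bounded by $CM$. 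Combining these with $\Phi(x_0,r)\le 2\Phi^*(x_0,r)$ and Lemma~\ref{lem:grad-est} yields \eqref{eq:gradient-est}.

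The main obstacle I anticipate is the interior-to-boundary bookkeeping: verifying that a single recursion for $\phi(x_0,\cdot)$ is valid uniformly across the transition scale $s_j\sim(x_0)_n$, and that the constants incurred in switching between $x_0$ and $\bar x_0$ remain universal. The $\Phi^*$-rewrite is the key algebraic maneuver: without it, the natural substitution $P_j=a_j+P_{j+1}$ in the one-scale bound forces a multiplicative factor of the form $\tfrac32+O(\omega_{A,p_0})$, whose product over $\log(1/r)$-many scales diverges. Replacing this factor by $1+O(\omega_{A,p_0})$ is precisely what produces the exponential-of-integral modulus appearing in $\eta$.
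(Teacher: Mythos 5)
Your overall skeleton (reduce to $\Phi$ via Lemma~\ref{lem:grad-est}, Caccioppoli for $M$, the $\Phi^*$ rewrite turning the factor $\tfrac32$ into $1+O(\omega_{A,p_0})$, then discrete Gronwall) matches the paper's, and the $\Phi^*$ algebra is exactly right. The genuine gap is the step you yourself flag and then wave through: the claim that a single one-scale recursion for $\phi(x_0,\cdot)$ holds uniformly across the transition scale $s_j\sim(x_0)_n$. For $s_j\ge(x_0)_n$ the only available decay estimate is Lemma~\ref{lem:bdry-diff-est} at the projected point $\bar x_0$, so to get $\phi(x_0,\ka s_j)\le\tfrac12\phi(x_0,s_j)+\mathrm{err}$ you must transfer $x_0\to\bar x_0$, apply the lemma, and transfer back. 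Each transfer shifts the radius by an additive $(x_0)_n\le s_j$, so you end up comparing $\phi(x_0,\ka s_j)$ with $\phi(\bar x_0,2\ka s_j)$ and then $\phi(\bar x_0,2s_j)$ with $\phi(x_0,3s_j)$; since $3s_j$ is not one of your dyadic scales, pushing it up to $\ka^{-1}s_j$ costs a volume factor $\ka^{-n/p_1}$, and the resulting coefficient $C\ka^{1-n/p_1}$ (with $n/p_1>1$) blows up as $\ka\to0$, destroying the contraction. This cannot be fixed by tuning $\ka$. The paper avoids it by performing the transfer \emph{once}, not per scale: it iterates the interior recursion at $x_0$ only for $\rho\le(x_0)_n$, then bounds the entire tail $\Phi^*(x_0,\ka^{-j_1}r)$ by $C\Phi^*(\bar x_0,\ka^{-j_1-2}r)+C\|\D u\|_{L^{p_1}(B_1^+)}$ using the one-way inclusion $B^+_{\ka^{-j}r}(x_0)\subset B^+_{\ka^{-j-2}r}(\bar x_0)$ summed over $j\ge j_1$, and thereafter runs the recursion entirely at $\bar x_0$. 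The (large, universal) transfer constant then enters multiplicatively once, outside the product $\prod_j(1+C\omega_{A,p_0})$, rather than being compounded at every scale. To repair your argument you must restructure it this way (and handle separately the easy regimes $r\gtrsim\ka^3$ and $(x_0)_n\gtrsim\ka^3$, as the paper's Cases B and C do).

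A secondary error: your Caccioppoli step bounds $M=\|\D u\|_{L^{p_1}(B_1^+)}$ by $C(\|u\|_{L^2(B_2^+)}+\|\bff\|_{L^2(B_2^+)})$, but $\eta(r)$ contains no term controlling the size of $\bff$, only its oscillation, so $\|\bff\|_{L^2(B_2^+)}\le C\eta(r)$ is false in general. Since $\div\bff=\div(\bff-\mean{\bff}_{B_2^+})$, you should apply Caccioppoli to the equation with $\bff-\mean{\bff}_{B_2^+}$, which gives $M\le C(\|u\|_{L^2(B_2^+)}+\omega_{\bff,2}(2))\le C(\|u\|_{L^2(B_2^+)}+\omega_{\bff,p}(1))$, and the last term is dominated by $\int_r^1\omega_{\bff,p}(s)s^{-1}\,ds$ via \eqref{eq:mod-alm-nondecr}.
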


\begin{proof}
We write for simplicity
$$
d:=(x_0)_n>0.
$$
We observe that if $\rho\in(0,d)$, then Lemma~\ref{lem:int-diff-est}, along with our choice of $\ka$, gives
\begin{align*}
    \Phi(x_0,\ka \rho)-\Phi(x_0,\rho)&\le \frac12\left(\Phi(x_0,\rho)-\Phi(x_0,\ka^{-1}\rho)\right)+C\omega_{\bff,p}(\rho)\\
    &\qquad+C\left(\Phi(x_0,\rho)+\|\D u\|_{L^{p_1}(B_1^+)}\right)\omega_{A,p_0}(\rho),
\end{align*}
which yields
\begin{align}
    \label{eq:Phi-tilde-est-int}
    \Phi^*(x_0,\ka\rho)&\le (1+C\omega_{A,p_0}(\rho))\Phi^*(x_0,\rho)+C\omega_{\bff,p}(\rho)+C\|\D u\|_{L^{p_1}(B_1^+)}\omega_{A,p_0}(\rho).
\end{align}
Similarly, if $\bar x_0:=(x_0',0)\in B'_{1/2}$ and $0<\rho<1/8$, then we use Lemma~\ref{lem:bdry-diff-est} and argue as above to obtain
\begin{align}
    \label{eq:Phi-tilde-est-bdry}
    \Phi^*(\bar x_0,\ka\rho)&\le (1+C\omega_{A,p_0}(\rho))\Phi^*(\bar x_0,\rho)+C\omega_{\bff,p}(\rho)+C\|\D u\|_{L^{p_1}(B_1^+)}\omega_{A,p_0}(\rho).
\end{align}
We then split the proof into the following three cases:\\
Case A. $d<\frac{\ka^3}2$ and $r<\frac{\ka^3}2$,\\
Case B. $r\ge\frac{\ka^3}2$,\\
Case C. $d\ge\frac{\ka^3}2$ and $r<\frac{\ka^3}2$.\\

Case A. We first consider the case when $d<\frac{\ka^3}2$ and $r<\frac{\ka^3}2$. Recall that $j_0:=j_0^r$ is the largest nonnegative integer satisfying $\ka^{-j_0}r\le 1/2$, and note that $j_0\ge3$. We further divide into two subcases either $r\le d$ or $r>d$.

\smallskip

Case A-1. Suppose $r\le d$. Let $j_1=j_1(r,x_0)$ be the largest nonnegative integer satisfying $\ka^{-j_1}r\le d$. Then, from
$$
\ka^{-j_1}r\le d<\frac{\ka^3}2<\ka^{-j_0+2}r,
$$
we see $j_1<j_0-2$. For $j=j_1,j_1+1,\ldots, j_0-2$, we have $\ka^{-j}r+d\le \ka^{-j}r+\ka^{-j-1}r\le \ka^{-j-2}r$, thus $B^+_{\ka^{-j}r}(x_0)\subset B^+_{\ka^{-j-2}r}(\bar x_0)$, where $\bar x_0=(x'_0,0)$ as before. Thus, for such $j$,
\begin{align*}
    \phi(x_0,\ka^{-j}r)&\le \left(\dashint_{B^+_{\ka^{-j}r}(x_0)}|\D u-\bq_{\bar x_0, \ka^{-j-2}r}|^{p_1}\right)^{1/p_1}\\
    &\le C\left(\dashint_{B_{\ka^{-j-2}r}^+(\bar x_0)}|\D u-\bq_{\bar x_0,\ka^{-j-2}r}|^{p_1}\right)^{1/p_1}= C\phi(\bar x_0,\ka^{-j-2}r).
\end{align*}
Moreover, by using $\ka^{-j_0+1}r>\frac{\ka^2}2$, we have
\begin{align}\label{eq:phi-est}
\phi(x_0,\ka^{-j_0+1}r)+\phi(x_0,\ka^{-j_0}r)\le C\|\D u\|_{L^{p_1}(B_1^+)}.
\end{align}
Thus,
\begin{align*}
    \Phi^*(x_0,\ka^{-j_1}r)&\le \Phi(x_0,\ka^{-j_1}r)=\sum_{j=j_1}^{j_0-2}\phi(x_0,\ka^{-j}r)+\phi(x_0,\ka^{-j_0+1}r)+\phi(x_0,\ka^{-j_0}r)\\
    &\le C\sum_{j=j_1}^{j_0-2}\phi(\bar x_0,\ka^{-j-2}r)+C\|\D u\|_{L^{p_1}(B_1^+)}\\
    &\le C\Phi(\bar x_0,\ka^{-j_1-2}r)+C\|\D u\|_{L^{p_1}(B_1^+)}\\
    &\le C\Phi^*(\bar x_0,\ka^{-j_1-2}r)+C\|\D u\|_{L^{p_1}(B_1^+)}.
\end{align*}
By using this estimate and applying \eqref{eq:Phi-tilde-est-int} with $\rho=\ka^{-1}r,\ka^{-2}r,\ldots, \ka^{-j_1}r$ and \eqref{eq:Phi-tilde-est-bdry} with $\rho=\ka^{-j_1-3}r, \ka^{j_1-4}r,\ldots,\ka^{-j_0}$, we deduce
\begin{align}\label{eq:Phi-tilde-est}\begin{split}
    &\Phi^*(x_0,r)\\
    &\le C\left[\Phi^*(\bar x_0,\ka^{-j_0}r)+\|\D u\|_{L^{p_1}(B_1^+)}+\sum_{j=1}^{j_0}\left(\omega_{\bff,p}(\ka^{-j}r)+\|\D u\|_{L^{p_1}(B_1^+)}\omega_{A,p_0}(\ka^{-j}r)\right)\right]\times\\
    &\qquad \times\prod_{j=1}^{j_0}(1+C\omega_{A,p_0}(\ka^{-j}r))\\
    &\le C \left(\|\D u\|_{L^{p_1}(B_1^+)}+\int_r^1\frac{\omega_{f,p}(s)}sds
+\|\D u\|_{L^{p_1}(B_1^+)}\int_r^1\frac{\omega_{A,p_0}(s)}sds\right)\times\\
&\qquad\times\exp\left(C\int_r^1\frac{\omega_{A,p_0}(s)}sds\right)\\
    &\le C\left(\|\D u\|_{L^{p_1}(B_1^+)}+\int_r^1\frac{\omega_{f,p}(s)}sds\right)\exp\left(C\int_r^1\frac{\omega_{A,p_0}(s)}sds\right).
\end{split}\end{align}
In addition, since $p_1<2$ and $u$ solves $D_\al(A^{\al\be}D_\be u)=\div\left(\bff-\mean{\bff}_{B_2^+}\right)$ in $B_2^+$, we have by applying Hölder's inequality, Caccioppoli's inequality, \eqref{eq:mod-nondecr}, and \eqref{eq:mod-alm-nondecr} that
\begin{align}\label{eq:Hol-Cac}
    \|\D u\|_{L^{p_1}(B_1^+)}\le C\left(\|u\|_{L^{2}(B_2^+)}+\omega_{\bff,2}(2)\right)\le C\left(\|u\|_{L^{2}(B_2^+)}+\omega_{\bff,p}(1)\right).
\end{align}
This, together with \eqref{eq:Phi-tilde-est} and the inequality $\Phi^*(x_0,r)\ge\frac12\Phi(x_0,r)$, yields $\Phi(x_0,r)\lesssim\eta(r)$. By combining this with Lemma~\ref{lem:grad-est} and \eqref{eq:Hol-Cac}, we conclude \eqref{eq:gradient-est}.

\medskip

Case A-2. Suppose $r>d$. Then we have
$$
\phi(x_0,\ka^{-j}r)\le C\phi(\bar x_0,\ka^{-j-2}r)
$$
for $j=0,1,\ldots,j_0-2$. This, along with \eqref{eq:phi-est}, gives
$$
\Phi^*(x_0,r)\le C\left(\Phi^*(\bar x_0,\ka^{-2}r)+\|\D u\|_{L^{p_1}(B_1^+)}\right).
$$
By using this estimate and applying \eqref{eq:Phi-tilde-est-bdry} with $\rho=\ka^{-3}r,\ka^{-4}r,\ldots, \ka^{-j_0}r$, we get \eqref{eq:Phi-tilde-est}. We then argue as in Case A-1 to obtain \eqref{eq:gradient-est}.\\

Case B. If $r\ge \frac{\ka^3}2$, then we apply \eqref{eq:Hol-Cac} to obtain
$$
\left(\dashint_{B_r^+(x_0)}|\D u|^{p_1}\right)^{1/p_1}\le C\|\D u\|_{L^{p_1}(B_1^+)}\le C\eta(r).
$$
\smallskip

Case C. Suppose $d\ge \frac{\ka^3}2$ and $r<\frac{\ka^3}2$. Note that $r<d$ and let $j_1\ge 0$ be as in Case A-1. Then we have $\ka^{-j_1-1}r>d\ge\frac{\ka^3}2$, thus $\ka^{-j_1}r>\frac{\ka^4}2$. If $j_1=0$, then we argue as in Case B to get \eqref{eq:gradient-est}. Otherwise, we apply \eqref{eq:Phi-tilde-est-int} with $\rho=\ka^{-1}r,\ka^{-2}r,\ldots,\ka^{-j_1}r$ and argue as in Case A to get \eqref{eq:gradient-est}. This completes the proof.
\end{proof}

Now we are ready to prove Theorem~\ref{thm:reg}.

\begin{proof}[Proof of Theorem~\ref{thm:reg}]
Since $u$ solves \eqref{eq:pde}, it is easily seen that $\tilde u:=u-g$ satisfies
\begin{align*}
    \begin{cases}
        D_\al(A^{\al\be}D_\be\tilde u)=\div\tilde\bff&\text{in }\Omega\cap B_1,\\
        \tilde u=0&\text{on }\partial\Omega\cap B_1,
    \end{cases}
\end{align*}
where $\tilde\bff:=\mean{\tilde f_1,\ldots,\tilde f_n}$ with $\tilde f_\al:=f_\al-A^{\al\be}D_\be g$, $1\le\al\le n$.

Next, we consider the standard local diffeomorphism that flattens the boundary $\partial\Omega$
\begin{align}\label{eq:diff-flat}
\Psi(x):=(x',x_n+\g_\Omega(x')).
\end{align}
We let $J_{\Psi^{-1}}$ be the Jacobian matrix associated with $\Psi^{-1}$. Then a direct calculation gives that $\hat u:=\tilde u\circ\Psi$ satisfies (possibly after a dilation)
$$
\begin{cases}
    D_\al(\hat A^{\al\be}D_\be\hat u)=\div\hat\bff&\text{in }B_2^+,\\
    \hat u=0&\text{on }B_2',
\end{cases}
$$
where $\hat\bff:=(\hat f^1,\ldots,\hat f^m)^T$ with $\hat f^i:=(\tilde f^i J_{\Psi^{-1}})\circ \Psi$ and $n\times n$-matrices $\hat A_{ij}:=(J_{\Psi^{-1}}^T A_{ij} J_{\Psi^{-1}})\circ \Psi$ for each $1\le i,j\le m$. 

Now, we let $x,y\in B_{1/2}^+$ with $r:=|x-y|\in(0,1/4)$. Then we apply Lemma~\ref{lem:Morrey}, the $W^{1,p}$ estimate, Poincaré inequality, and Proposition~\ref{prop:grad-est} to get
\begin{align*}
    \frac{|\hat u(x)-\hat u(y)|}{|x-y|}&\lesssim \left(\dashint_{B_r^+(x)}|\D\hat u|^p\right)^{1/p}\\
    &\lesssim \left(\dashint_{B_{2r}^+(x)}|\D\hat u|^{p_1}\right)^{1/p_1}+\omega_{\hat\bff,p}(2r)\\
    &\lesssim \left(\|\hat u\|_{L^2(B_2^+)}+\int_r^1\frac{\omega_{\hat\bff,p}(s)}sds\right)\exp\left(C\int_r^1\frac{\omega_{\hat A,p_0}(s)}sds\right).
\end{align*}
By using $\hat u=(u-g)\circ\Psi$ and definitions of $\hat\bff$ and $\hat A$ and applying Lemma~\ref{lem:DMO-proper}, we conclude \eqref{eq:reg}. 
\end{proof}


\subsection{Proof of Theorem~\ref{thm:Hopf}}\label{sec:main-thm-2} In this section, we give the proof of our second main result, Theorem~\ref{thm:Hopf}.

The constants $C$ and $c$ throughout this section may vary from line to line, but depend only on $n$, $\la$, $p_0$, and $\omega_{A,1}$.

\begin{proof}[Proof of Theorem~\ref{thm:Hopf}]
We divide the proof into two steps.

\medskip\noindent\emph{Step 1.} We first consider the simple case with a flat boundary and a zero boundary condition:
\begin{align*}
    \begin{cases}
        \div(A\D u)=0&\text{in }B_1^+,\\
        u=0&\text{on }B_1'.
    \end{cases}
\end{align*}
The aim of this step is to show that
\begin{align}
    \label{eq:lower-bound}
    \frac{u(x_n)}{x_n}\ge c\|u\|_{L^2(B_{3/4}^+)}\exp\left(-C\int_{x_n}^1\frac{\omega(s)}sds\right),\quad 0<x_n<1/2.
\end{align}
Let $\cD$ be a smooth and convex domain in $\R^n$ satisfying $B^+_{2/3}\subset \cD\subset B^+_{3/4}$. For any $k\in\mathbb{N}$, we write $r_k:=2^{-k}$ and $\cD_k:=r_k\cD$. We also write for simplicity $\omega=\omega_{A,p_0}$. Let $w_k$ be a solution of 
\begin{align*}
    \begin{cases}
        \div(\mean{A}_{\cD_k}\D w_k)=\div((\mean{A}_{\cD_k}-A)\D u)&\text{in }\cD_k,\\
        w_k=0&\text{on }\partial \cD_k.
    \end{cases}
\end{align*}
Then $v_k:=u-w_k$ satisfies
\begin{align*}
    \begin{cases}
        \div(\mean{A}_{\cD_k}\D v_k)=0&\text{in }\cD_k,\\
        v_k=u&\text{on }\partial\cD_k.
    \end{cases}
\end{align*}
Note that since $u\ge0$, $v_k\ge0$ by the maximum principle. We take $p_1,p_2\in\R$ such that $1<p_1<p_0<p_2$ and $\frac{1}{p_0}+\frac1{p_2}=\frac1{p_1}$. Then we have by applying Hölder's inequality, the $W^{1,p_2}$ estimate, Carleson type estimate, and the Sobolev inequality that
\begin{align}\label{eq:w-grad-est}\begin{split}
    \left(\dashint_{\cD_k}|\D w_k|^{p_1}\right)^{1/p_1}&\le C\left(\dashint_{\cD_k}|(\mean{A}_{\cD_k}-A)\D u|^{p_1}\right)^{1/p_1}\\
    &\le C\omega(r_k)\left(\dashint_{\cD_k}|\D u|^{p_2}\right)^{1/p_2}\le C\frac{\omega(r_k)}{r_k}\left(\dashint_{2\cD_k}|u|^{p_2}\right)^{1/p_2}\\
    &\le C\frac{\omega(r_k)}{r_k}\inf_{x\in \cD_{k+1},x_n>r_{k+2}}u \\
    &\le C\frac{\omega(r_k)}{r_k}\left(\inf_{x\in \cD_{k+1},x_n>r_{k+2}}|w_k|+\sup_{\cD_{k+1}}v_k\right)\\
    &\le C_0\omega(r_k)\left(\dashint_{\cD_k}|\D w_k|^{p_1}\right)^{1/p_1}+C\frac{\omega(r_k)}{r_k}\sup_{\cD_{k+1}}v_k.
\end{split}\end{align}
Here, for the Carleson type estimate, we refer to \cite{CafFabMorSal81}*{Theorem~1.1}. Although the theorem is stated under the symmetry assumption on the coefficient matrix $A$, it is easily seen that the proof works without the symmetry condition.

For $C_0$ as in \eqref{eq:w-grad-est}, since $A$ is bounded and of VMO, we can take $k_0\in\mathbb{N}$ such that $\omega(r_k)<\frac1{2C_0}$ for every $k\ge k_0$. Then, we have by \eqref{eq:w-grad-est},
\begin{align}
    \label{eq:w-Lp-est}
    \frac{\inf_{x\in \cD_{k+1},x_n>r_{k+2}}|w_k|}{r_k}\lesssim \left(\dashint_{\cD_k}|\D w_k|^{p_1}\right)^{1/p_1}\lesssim \frac{\omega(r_k)}{r_k}\sup_{\cD_{k+1}}v_k, \quad k\ge k_0.
\end{align}
We denote
$$
\mu_k:=\inf_{\cD_{k+1}}\frac{v_k}{x_n}.
$$
Then, by applying the second inequality in \eqref{eq:w-Lp-est} and the boundary Harnack principle, we get
\begin{align}
    \label{eq:w-Lp-est-2}
    \left(\dashint_{\cD_k}|\D w_k|^{p_1}\right)^{1/p_1}\lesssim \omega(r_k)\sup_{\cD_{k+1}}\frac{v_k}{x_n}\lesssim \omega(r_k)\mu_k.
\end{align}
Moreover, we have by the boundary estimates for elliptic equations with constant coefficients and the boundary Harnack principle that for $k\in\mathbb{N}$,
\begin{align}
    \label{eq:v-sup-est}
    \begin{aligned}
           &2^{-k}\|D^2 v_k\|_{L^\infty(\cD_{k+1})}+  \|\D v_k\|_{L^\infty(\cD_{k+1})}\lesssim \frac{1}{r_k}\left(\dashint_{\frac12(\cD_k+\cD_{k+1})}|v_k|^{p_1}\right)^{1/p_1}\\
           &\lesssim \sup_{\frac12(\cD_k+\cD_{k+1})}\frac{v_k}{x_n}\lesssim \mu_{k}.
    \end{aligned}
\end{align}
Now, in $\cD_{k+1}$, we further decompose $v_{k}=\tilde w_{k}+\tilde v_{k}$, where $\tilde w_{k}$ solves
\begin{align*}
    \begin{cases}
        \div(\mean{A}_{\cD_{k+1}}\D\tilde w_{k})=\div((\mean{A}_{\cD_{k+1}}-\mean{A}_{\cD_{k}})\D v_{k})&\text{in }\cD_{k+1},\\
        \tilde w_{k}=0&\text{on }\partial\cD_{k+1}.
    \end{cases}
\end{align*}
By using the Schauder estimate and \eqref{eq:v-sup-est}, we obtain
\begin{align}
    \label{eq:tilde-w-sup-est}
    \|\D\tilde w_{k}\|_{L^\infty(\cD_{k+1})}+\frac1{r_k}\|\tilde w_{k}\|_{L^\infty(\cD_{k+1})}\lesssim \omega(r_k)\mu_{k}.
\end{align}
We denote
$$
\tilde\mu_k:=\inf_{\cD_{k+2}}\frac{\tilde v_k}{x_n},\quad k\in\mathbb{N}.
$$
Since both $v_{k+1}$ and $\tilde v_{k}$ satisfy the equation
\begin{align*}
    \begin{cases}
        \div(\mean{A}_{\cD_{k+1}}\D v)=0&\text{in }\cD_{k+1},\\
        v=0&\text{on }\overline{\cD_{k+1}}\cap \{x_n=0\},
    \end{cases}
\end{align*}
we have by using the boundary elliptic estimate, the triangle inequality, and Poincaré inequality that
\begin{align*}
    \tilde\mu_{k}-\mu_{k+1}&=\inf_{\cD_{k+2}}\frac{\tilde v_k}{x_n}-\inf_{\cD_{k+2}}\frac{v_{k+1}}{x_n}\le \sup_{\cD_{k+2}}\frac{|\tilde v_{k}-v_{k+1}|}{x_n}\\
    &\lesssim \sup_{\cD_{k+2}}|\D(\tilde v_{k}-v_{k+1})|\lesssim \frac1{r_k}\left(\dashint_{\cD_{k+1}}|\tilde v_k-v_{k+1}|^{p_1}\right)^{1/p_1}\\
    &\lesssim \frac1{r_k}\left(\dashint_{\cD_{k+1}}|v_k-v_{k+1}|^{p_1}\right)^{1/p_1}+\frac1{r_k}\left(\dashint_{\cD_{k+1}}|\tilde w_{k}|^{p_1}\right)^{1/p_1}\\
    &=\frac1{r_k}\left(\dashint_{\cD_{k+1}}|w_k-w_{k+1}|^{p_1}\right)^{1/p_1}+\frac1{r_k}\left(\dashint_{\cD_{k+1}}|\tilde w_{k}|^{p_1}\right)^{1/p_1}\\
    &\lesssim \left(\dashint_{\cD_{k+1}}|\D(w_k-w_{k+1})|^{p_1}\right)^{1/p_1}+\frac1{r_k}\left(\dashint_{\cD_{k+1}}|\tilde w_{k}|^{p_1}\right)^{1/p_1}\\
    &\lesssim \omega(r_k)\mu_k+\omega(r_{k+1})\mu_{k+1},
\end{align*}
where we applied \eqref{eq:w-Lp-est-2} and \eqref{eq:tilde-w-sup-est} in the last step. On the other hand, by using \eqref{eq:tilde-w-sup-est} and that $w_k=0$ on $\partial\cD_k$, we get
\begin{align*}
    \tilde\mu_k\ge \inf_{\cD_{k+2}}\frac{v_k}{x_n}-\sup_{\cD_{k+2}}\frac{|\tilde w_k|}{x_n}\ge \mu_k-\sup_{\cD_{k+2}}|\D\tilde w_k|\ge \mu_k-C\omega(r_k)\mu_k.
\end{align*}
By combining the preceding two estimates, we deduce
$$
(1+C\omega(r_{k+1}))\mu_{k+1}\ge (1-C\omega(r_k))\mu_k,
$$
and thus,
$$
\mu_{k+1}\ge (1-C\omega(r_k))\mu_k,\quad k\ge k_0.
$$
By iteration, we further have for $k\ge k_0$,
\begin{align}
    \label{eq:v-lower-bound}
    \mu_k\ge \mu_{k_0}\exp\left(-C\int_{r_k}^1\frac{\omega(s)}sds\right).
\end{align}
To find a lower bound for $\mu_{k_0}$, we use \eqref{eq:v-sup-est}, \eqref{eq:tilde-w-sup-est} and the Carleson type estimate to get
\begin{align*}
    C\mu_{k_0}&\ge \frac1{r_{k_0}}\left(\dashint_{\cD_{k_0}}|v_{k_0}|^{p_1}\right)^{1/p_1}\ge\frac1{r_{k_0}}\left(\dashint_{\cD_{k_0}}|u|^{p_1}\right)^{1/p_1}-\frac1{r_{k_0}}\left(\dashint_{\cD_{k_0}}|w_{k_0}|^{p_1}\right)^{1/p_1}\\
    &\ge c\|u\|_{L^2(B_{3/4}^+)}-C\omega(r_{k_0})\mu_{k_0}.
\end{align*}
This gives 
$$
\mu_{k_0}\ge c\|u\|_{L^2(B_{3/4}^+)},
$$
which combined with \eqref{eq:v-lower-bound} yields
\begin{align}
    \label{eq:v-lower-bound-1}
    \mu_k\ge c\|u\|_{L^2(B_{3/4}^+)}\exp\left(-C\int_{r_k}^1\frac{\omega(s)}sds\right).
\end{align}
Next, we use the first inequality in \eqref{eq:w-Lp-est} and \eqref{eq:w-Lp-est-2} to have
\begin{align*}
    \frac{\inf_{x\in\cD_{k+1}, x_n>r_{k+2}}|w_k|}{r_k}\lesssim \omega(r_k)\mu_k.
\end{align*}
Take $x_k\in\{x\in\overline{\cD_{k+1}}\,:\, x_n\ge r_{k+2}\}$ such that
$
|w_k(x_k)|=\inf_{x\in\cD_{k+1}, x_n>r_{k+2}}|w_k|.
$
Then, by the above estimate,
\begin{align*}
    \frac{u(x_k)}{r_{k+1}}\ge \frac{v_k(x_k)}{r_{k+1}}-\frac{|w_k(x_k)|}{r_{k+1}}\ge \frac{v_k(x_k)}{r_{k+1}}-C\omega(r_k)\mu_k\ge (1-C\omega(r_k))\frac{v_k(x_k)}{r_{k+1}}.
\end{align*}
For $k\ge k_0$ large so that $1-C\omega(r_k)\ge1/2$, we have by the previous estimate and \eqref{eq:v-lower-bound-1},
$$
\frac{u(x_k)}{r_{k+1}}\ge c\|u\|_{L^2(B_{3/4}^+)}\exp\left(-C\int_{r_k}^1\frac{\omega(s)}sds\right).
$$
Finally, by applying the Harnack inequality, we conclude \eqref{eq:lower-bound}. 

\medskip\noindent\emph{Step 2.}
In this step, we consider the general case, where $u$ is a solution of \eqref{eq:Hopf-nonflat}. 
We  use \eqref{eq:diff-flat} to straighten the boundary $\partial\Omega$ as in the proof of Theorem~\ref{thm:reg}:
\begin{align*}
    \div(\hat A\D\hat u)=0\quad\text{in }B_1^+,
\end{align*}
where $\hat u:=u\circ\Psi$ and $\hat A:=(J^T_{\Psi^{-1}}AJ_{\Psi^{-1}})\circ\Psi$. 

We consider a smooth cut-off function $\zeta=\zeta(x_n)$ satisfying $0\le\zeta\le1$, $\zeta=1$ when $x_n\ge 1/4$, and $\zeta=0$ when $x_n\le 1/8$. We let $\bar u$ be the solution of
\begin{align*}
    \begin{cases}
        \div(\hat A\D\bar u)=0&\text{in }B_{7/8}^+,\\
        \bar u=\hat u\,\zeta&\text{on }\partial B_{7/8}^+.
    \end{cases}
\end{align*}
Then $\bar u=0$ on $B_{7/8}'$ and, by the comparison principle, 
\begin{equation}
    \label{eq0922}
0\le\bar u\le \hat u\quad \text{in}\, B^+_{7/8}.
\end{equation}
By applying \eqref{eq:lower-bound} to $\bar u$, we get
\begin{align}
    \label{eq:lower-bound-2}
    \frac{\bar u(x_n)}{x_n}\ge c\|\bar u\|_{L^2(B_{3/4}^+)}\exp\left(-C\int_{x_n}^1\frac{\omega(s)}sds\right),\quad 0<x_n<1/2.
\end{align}

To rewrite \eqref{eq:lower-bound-2} in terms of $u$, we define
$$
S:=\partial B_{7/8-\rho_0}\cap\{x_n>1/4\}
$$
for some $\rho_0\in (0,1/20)$ to be chosen later. We claim that for some $C=C(n,\la)>0$,
\begin{align}
    \label{eq:sup-inf}
    \sup_S\hat u\le C\inf_S\bar u.
\end{align}
To prove it, we write $M:=\sup_S\hat u$, and note that $cM\le\hat u\le CM$ in $B_{15/16}^+\cap\{x_n\ge 1/8\}$ by the Harnack inequality. Given $x\in S$, we let $x^*:=\frac{7/8}{7/8-\rho_0}x$ so that $x^*\in\partial B_{7/8}\cap \{x_n>1/4\}$ and $|x^*-x|=\rho_0$. By the boundary Hölder estimate for $\bar u$, the interior Hölder estimate for $\hat u$, and \eqref{eq0922}, we have for some $\de=\de(n,\la)\in(0,1)$ that
\begin{align*}
    [\bar u]_{C^\de(B_{1/20}(x^*)\cap B_{7/8}^+)}&\le C\left(\|\bar u\|_{L^2(B_{1/10}(x^*)\cap B_{7/8}^+)}+[\hat u]_{C^\de(B_{1/10}(x^*)\cap B_{7/8}^+)}\right)\\
    &\le C\left(\|\hat u\|_{L^2(B_{1/10}(x^*)\cap B_{7/8}^+)}+\sup_{B_{1/8}(x^*)\cap B_{15/16}^+}\hat u\right)\\
    &\le CM.
\end{align*}
It follows that
\begin{align*}
    \bar u(x)&\ge \bar u(x^*)-[\bar u]_{C^\de(B_{1/20}(x^*)\cap B_{7/8}^+)}|x-x^*|^\de\ge\hat u(x^*)-CM\rho_0^\de\ge c_0M-C_0M\rho_0^\de.
\end{align*}
Here, $c_0>0$ and $C_0>0$ depend only on $n$ and $\la$. By taking $\rho_0:=\left(\frac{c_0}{2C_0}\right)^{1/\de}$, we get $\bar u(x)\ge \frac{c_0}2M$. Since $x\in S$ is arbitrary, \eqref{eq:sup-inf} is proved.

We now use \eqref{eq:sup-inf} and apply the Harnack inequality to get
\begin{align*}
    \|\hat u\|_{L^2(B^+_{3/4}\cap \{x_n>1/4\})}\lesssim \sup_S\hat u\lesssim \inf_S\bar u\lesssim \|\bar u\|_{L^2(B^+_{3/4})}.
\end{align*}
By using this inequality, \eqref{eq:lower-bound-2}, and the definitions of $\bar u$ and $\hat A$, we conclude Theorem~\ref{thm:Hopf}.
\end{proof}

\section*{Acknowledgment}
H. Dong was partially supported by the NSF under agreement DMS-2350129. S. Jeon was supported by the research fund of Hanyang University(HY-202400000003278). The authors would like to thank Stefano Vita for helpful discussions.


\begin{bibdiv}
\begin{biblist}

\bib{AN22}{article}{
    AUTHOR = {Apushkinskaya, D. E}, 
    AUTHOR = {Nazarov, A. I.},
     TITLE = {The normal derivative lemma and surrounding issues},
   JOURNAL = {Uspekhi Mat. Nauk},
    VOLUME = {77},
      YEAR = {2022},
    NUMBER = {2(464)},
     PAGES = {3--68},
      ISSN = {0042-1316,2305-2872},
   review={\MR{4461367}},
       DOI = {10.4213/rm10049},
       URL = {https://doi.org/10.4213/rm10049},
}

\bib{CafFabMorSal81}{article}{
   author={Caffarelli, L},
   author={Fabes, E},
   author={Mortola, S},
   author={Salsa, S.},
   title={Boundary behavior of nonnegative solutions of elliptic operators in divergence form},
   volume={30},
   journal={Indiana Univ. Math. J.},
   date={1981},
   number={4},
   pages={621--640},
   issn={0022-2518,1943-5258},
   review={\MR{620271}},
   doi={10.1512/iumj.1981.30.30049},
 }

\bib{DonJeoVit24}{article}{
   author={Dong, Hongjie},
   author={Jeon, Seongmin},
   author={Vita, Stefano},
   title={Schauder type estimates for degenerate or singular elliptic equations with DMO coefficients},
   volume={63},
   journal={Calc. Var. Partial Differential Equations},
   date={2024},
   number={9},
   pages={Paper No. 239, 42},
   issn={0944-2669,1432-0835},
   review={\MR{4821888}},
   doi={10.1007/s00526-024-02840-3},
 }

\bib{DonKim17}{article}{
   author={Dong, Hongjie},
   author={Kim, Seick},
   title={On {$C^1$}, {$C^2$}, and weak type-{$(1,1)$} estimates for linear elliptic operators},
   journal={Comm. Partial Differential Equations},
   volume={42},
   date={2017},
   number={3},
   pages={417--435},
   issn={0360-5302},
   review={\MR{3620893}},
   doi={10.1080/03605302.2017.1278773},
}

\bib{Eva98}{book}{
   author={Evans, Lawrence C.},
   title={Partial differential equations},
   series = {Graduate Studies in Mathematics},
   volume={19},
   publisher = {American Mathematical Society, Providence, RI},
   date={1998},
   pages={xviii+662},
   isbn={0-8218-0772-2},
   review={\MR{1625845}},
}

\bib{Li17}{article}{
   author={Li, Yanyan},
   title={On the {$C^1$} regularity of solutions to divergence form elliptic systems with {D}ini-continuous coefficients},
   journal={Chinese Ann. Math. Ser. B},
   volume={38},
   date={2017},
   pages={489--496},
   issn={0252-9599},
   review={\MR{3615500}},
   doi={10.1007/s11401-017-1079-4},
}

\bib{Naz12}{article}{
   author={Nazarov, A. I.},
   title={A centennial of the {Z}aremba-{H}opf-{O}leinik lemma},
   journal={SIAM J. Math. Anal.},
   volume={44},
   date={2012},
   number={1},
   pages={437--453},
   issn={0036-1410,1095-7154},
   review={\MR{2888295}},
   doi={10.1137/110821664},
}

\bib{RenSirSoa23}{article}{
   author={Rend\'{o}n, Fiorella},
   author={Sirakov, Boyan},
   author={Soares, Mayra},
   title={Boundary weak Harnack estimates and regularity for elliptic PDE in
   divergence form},
   journal={Nonlinear Anal.},
   volume={235},
   date={2023},
   pages={Paper No. 113331, 13},
   issn={0362-546X},
   review={\MR{4617058}},
   doi={10.1016/j.na.2023.113331},
}

\bib{Saf08}{article}{
   author={Safonov, Mikhail},
   title={Boundary estimates for positive solutions to second order elliptic equations},
   pages={20 pp},
   date={2008},
   status={arXiv:0810.0522 preprint},
}

 \bib{Tor24}{article}{
   author={Torres-Latorre, Clara},
   title={Boundary estimates for non-divergence equations in $C^1$ domains},
   pages={21 pp},
   date={2024},
   status={arXiv:2410.15782
 preprint},
 }

\end{biblist}
\end{bibdiv}
\end{document}